\documentclass{amsart}
\usepackage{adjustbox,lipsum}
\usepackage{amsmath}
\usepackage{amssymb}
\usepackage{MnSymbol}
\usepackage[shortlabels]{enumitem}
\usepackage{tabu}
\usepackage{tikz-cd}
\usepackage{extarrows}
\usepackage{makecell}

\usepackage[
        colorlinks, citecolor=darkgreen,
        backref,
        pdfauthor={Samir Siksek},
]{hyperref}

\usepackage{comment}



\newcommand{\Aff}{\mathbb{A}}

\newcommand{\C}{\mathbb{C}}

\newcommand{\KK}{\mathbb{K}}
\newcommand{\PP}{\mathbb{P}}
\newcommand{\Q}{\mathbb{Q}}







\newcommand{\cP}{\mathcal{P}}

\newcommand{\cZ}{\mathcal{Z}}


\newcommand{\fm}{\mathfrak{m}}

\newcommand{\OO}{\mathcal{O}}

\DeclareMathOperator{\PSL}{PSL}
\DeclareMathOperator{\Stab}{Stab}
\DeclareMathOperator{\Fix}{Fix}
\DeclareMathOperator{\fpr}{fpr}
\DeclareMathOperator{\ind}{ind}

\DeclareMathOperator{\Orb}{Orb}
\DeclareMathOperator{\genus}{genus}

\DeclareMathOperator{\Aut}{Aut}

\DeclareMathOperator{\divv}{div}

\DeclareMathOperator{\Gal}{Gal}

\DeclareMathOperator{\BV}{BV}

\DeclareMathOperator{\Ram}{Ram}

\DeclareMathOperator{\ord}{ord}

\DeclareMathOperator{\prim}{prim}

\DeclareMathOperator{\Sym}{Sym}

\renewcommand{\setminus}{-}

\newtheorem{thm}{Theorem}

\newtheorem{lem}[thm]{Lemma}

\newtheorem{cor}[thm]{Corollary}

\theoremstyle{definition}

\theoremstyle{remark}

\definecolor{darkgreen}{rgb}{0,0.5,0}







\begin{document}

\title[]{
A single source theorem for\\ 
primitive points on curves
}

\begin{abstract}
Let $C$ be a curve defined over a number field $K$ and write $g$ for the
genus of $C$ and $J$ for the Jacobian of $C$.
Let $n \ge 2$. We say that an algebraic point $P \in C(\overline{K})$
has degree $n$ if the extension $K(P)/K$ has degree $n$.
By the Galois group of $P$ we mean the Galois group of
the Galois closure of $K(P)/K$ which
we identify as a transitive subgroup of $S_n$. We say that $P$
is primitive if its Galois group is primitive as a subgroup of $S_n$.
We prove the following \lq single source\rq\ theorem
for primitive points. Suppose $g>(n-1)^2$ if $n \ge 3$
and $g \ge 3$ if $n=2$.
Suppose that either $J$ is simple,
or that $J(K)$ is finite. Suppose $C$ has infinitely
many primitive degree $n$ points. Then
there is a degree $n$ morphism $\varphi : C \rightarrow \PP^1$
such that all but finitely many primitive degree $n$
points correspond to fibres $\varphi^{-1}(\alpha)$
with $\alpha \in \PP^1(K)$.

We prove moreover, under the same hypotheses, that if $C$ has infinitely many
degree $n$ points with Galois group $S_n$
or $A_n$, then $C$ has only finitely many degree $n$ points
of any other primitive Galois group. The proof makes
essential use of recent results of Burness and Guralnick
on fixed point ratios of faithful, primitive group actions.
\end{abstract}

\author{Maleeha Khawaja}

\address{
	School of Mathematics and Statistics\\
	Hicks Building\\
	University of Sheffield\\
	Sheffield S3 7RH\\
	United Kingdom
	}
\email{mkhawaja2@sheffield.ac.uk}

\author{Samir Siksek}

\address{Mathematics Institute\\
    University of Warwick\\
    CV4 7AL \\
    United Kingdom}

\email{s.siksek@warwick.ac.uk}

\date{\today}
\thanks{
Khawaja is supported by an EPSRC studentship from the University of Sheffield (EP/T517835/1).
Siksek is supported by the
EPSRC grant \emph{Moduli of Elliptic curves and Classical Diophantine Problems}
(EP/S031537/1). }
\keywords{Curves, low degree points, primitive points, fixed point ratio}

\makeatletter
\@namedef{subjclassname@2020}{%
  \textup{2020} Mathematics Subject Classification}
\makeatother

\subjclass[2020]{Primary 11G30. Secondary 20B15, 11S20}

\maketitle

\section{Introduction}

Low degree points on curves have been a subject of intensive study.
Perhaps the most celebrated result in this subject
is Merel's uniform boundedness theorem \cite{Merel} which
asserts that the only degree $n$ points on the modular
curve $X_1(p)$ (with $p$ prime) are cuspidal, for $n< 2 \log_{3}(\sqrt{p}-1)$.
A common theme in the subject is to seek a description of which
curves can have infinitely many points of a certain degree.
For example, a famous theorem of Harris and Silverman
\cite{HS} asserts that if a curve $C/\Q$ of genus $\ge 2$
has infinitely many quadratic points then it is either hyperelliptic
or bielliptic. The strongest results to date on low degree points
on curves are due to Smith and Vogt \cite{SmithVogt} who prove
several theorems relating the minimal $n$ for which $C$ has
infinitely many degree $n$ points to the $K$-gonality of $C$.
By comparison, the question of which groups arise infinitely often as Galois groups 
of low degree points on a curve has received very little attention. 
This paper is concerned with giving insights into this question for 
primitive groups. Before we go further we recall the notion of
a primitive permutation group.
Let $G$ be a group acting on a finite set $\Omega$.
We say the action is \textbf{primitive} if
it is transitive and
the only partitions of $\Omega$ that are
$G$-stable are $\{\Omega\}$
and $\{\{\omega\} : \omega \in \Omega\}$.
It is well-known that a $2$-transitive group
acts primitively (Lemma~\ref{lem:2trans} below), and thus $S_n$ and $A_n$
are primitive groups (with their natural
action on $\{1,2,\dotsc,n\}$), for $n \ge 1$
and $n \ge 3$ respectively.

\medskip

Let $K$ be a perfect field 
and let $\overline{K}$ denote a fixed algebraic closure of $K$.
Write $G_K=\Gal(\overline{K}/K)$ for the absolute Galois group of $K$.
Let $C$ be a curve defined over $K$
(by which we mean a smooth projective
and geometrically irreducible variety defined over $K$ having 
dimension $1$). By a degree $n$ point on $C/K$ we mean an algebraic
point $P \in C(\overline{K})$ such that $[K(P):K]=n$. 
Equivalently, the orbit of $P$
under the action of $G_K$ has size $n$. 
If the orbit of $P$ is $\{P_1,\dotsc,P_n\}$,
then we define the \textbf{Galois group of $P$}, which we denote by
$\Gal(P/K)$, 
to be the image of the natural permutation
representation $G_K \rightarrow \Sym(\{P_1,\dotsc,P_n\})$.
Thus we may identify $\Gal(P/K)$ (up to conjugation) as a transitive
subgroup of the $n$-th symmetric group $S_n$.
The Galois group of $P$ is also the Galois group
of the Galois closure of $K(P)/K$.
We say that the point $P$ is \textbf{primitive} if $\Gal(P/K)$ acts
primitively on $\{P_1,\dotsc,P_d\}$. 
We call a divisor $D$ on $C$ \textbf{rational} if it is supported
on $C(\overline{K})$ and stable under the action of $G_K$.
Henceforth all divisors considered are assumed to be rational.
An effective divisor $D$ is said to be \textbf{reducible} if it admits a decomposition
$D=D_1+D_2$ where $D_1>0$, $D_2>0$ and both are rational, otherwise we say
that $D$ is irreducible. Thus an irreducible divisor consists of 
a single Galois orbit of algebraic points. We call an irreducible
divisor \textbf{primitive} if it is the Galois orbit of a primitive point.

\medskip

\begin{thm}\label{thm:single}
Let $K$ be a number field. Let $C/K$ be a curve of genus $g$,
and write $J$ for the Jacobian of $C$.
Let $n \ge 2$ and suppose 
\begin{equation}\label{eqn:glb}
	\begin{cases} g>(n-1)^2 & \text{if $n \ge 3$}\\
		g\ge 3 & \text{if $n=2$}.
	\end{cases}
\end{equation}
Suppose that either $J$ is simple,
or $J(K)$ is finite.
If $C$ has infinitely many primitive points of degree $n$,
then there is a degree $n$ morphism $\varphi : C \rightarrow \PP^1$
defined over $K$
such that all but finitely many primitive degree $n$ divisors 
are fibres $\varphi^{*}(\alpha)$ with $\alpha \in \PP^1(K)$.
\end{thm}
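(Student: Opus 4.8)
The plan is to study the infinitude of primitive degree $n$ points through the symmetric power $C^{(n)}$ and the Jacobian, reduce to a single linear system of positive dimension, and then extract a degree $n$ map to $\PP^1$ using primitivity together with Castelnuovo-type inequalities. First I would pass from points to divisor classes. A primitive degree $n$ point corresponds to an irreducible (hence reduced, consisting of a single Galois orbit) rational effective divisor of degree $n$, i.e. a $K$-point of $C^{(n)}$. Composing with the Abel--Jacobi map $C^{(n)} \to \Pic^n(C)$ sends these to $K$-points of the image $W_n \subseteq \Pic^n(C)$. Since $C$ has a degree $n$ point, $\Pic^n(C)(K) \ne \emptyset$, so I may identify $\Pic^n(C) \cong J$ over $K$ and regard $W_n$ as a subvariety of $J$ of dimension $\min(n,g)=n$; note that \eqref{eqn:glb} forces $n<g$. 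The key reduction is that, under either hypothesis, the image of $C^{(n)}(K)$ in $\Pic^n(C)$ is \emph{finite}: if $J(K)$ is finite this is immediate, while if $J$ is simple it follows from Faltings' theorem on rational points of subvarieties of abelian varieties, since the only abelian subvarieties of $J$ are $0$ and $J$, and a translate of $J$ cannot lie in $W_n$ because $\dim W_n = n < g$. Hence all degree $n$ divisors lie in finitely many complete linear systems $|D_1|,\dots,|D_m|$.

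By pigeonhole, some $|D|$ among these contains infinitely many primitive divisors, so $\dim|D| \ge 1$. I would first show $|D|$ is base-point-free: its fixed part $F$ is rational, so a primitive (hence irreducible) $K$-rational member $E = F + (E-F)$ would be reducible unless $F=0$. Thus $|D|$ defines a morphism $\phi\colon C \to \PP^r$ over $K$ onto a nondegenerate curve $C'$ of degree $\delta$, with $n = e\delta$ where $e=\deg\phi$ and $\delta \ge r$. I then claim $r=1$. If $r\ge 2$ and $e=1$, then $C$ is birational to a nondegenerate degree $n$ curve in $\PP^r$, whose genus is at most the Castelnuovo bound $\binom{n-1}{2} < (n-1)^2$, contradicting \eqref{eqn:glb}. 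If $r \ge 2$ and $e \ge 2$, then $\delta \ge 2$, and the degree $e$ cover $\phi\colon C\to C'$ restricts to a $G_K$-stable partition of the $n$ points of a reduced primitive member $E=\phi^{*}(H)$ into $\delta \ge 2$ fibre-blocks of size $e \ge 2$; this nontrivial block system contradicts primitivity of the Galois group. Hence $r=1$, and $\varphi := \phi\colon C \to \PP^1$ has degree $n$; its $K$-rational members, in particular the primitive divisors of $|D|$, are exactly the fibres $\varphi^{*}(\alpha)$ with $\alpha\in\PP^1(K)$.

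Finally I would prove single-source uniqueness. Any other $|D_j|$ containing infinitely many primitive divisors yields, by the same argument, a degree $n$ map $\varphi_j\colon C\to\PP^1$. Applying the Castelnuovo--Severi inequality to $\varphi$ and $\varphi_j$ gives $g \le (n-1)^2$ unless they factor through a common cover $\mu\colon C\to C''$ of degree $>1$; since $g>(n-1)^2$, such a $\mu$ must exist. If $\deg\mu < n$, then $\mu$ induces a nontrivial $G_K$-stable block system on the points of a primitive fibre of $\varphi$, again contradicting primitivity; hence $\deg\mu = n$, which forces $\varphi$ and $\varphi_j$ to have identical fibres. Thus every linear system with infinitely many primitive members produces the very same fibration $\varphi$, while the finitely many remaining systems account for only finitely many primitive divisors, so all but finitely many primitive degree $n$ divisors are fibres $\varphi^{*}(\alpha)$ with $\alpha\in\PP^1(K)$.

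I expect the main obstacle to lie in the middle and final steps rather than in the Faltings reduction, which is comparatively routine. The delicate point throughout is the translation of \emph{geometric} factorisations (an intermediate cover $C\to C'$ or $C\to C''$) into the \emph{group-theoretic} statement of a nontrivial, $G_K$-stable block system on the support of a primitive divisor, so that primitivity may be invoked; this requires care that the relevant members are reduced and that the fibre partition genuinely has blocks of size strictly between $1$ and $n$. A secondary technical point is verifying that the Castelnuovo genus bound and the Castelnuovo--Severi inequality are compatible with the precise threshold $g>(n-1)^2$ of \eqref{eqn:glb}, which is exactly what makes the argument close.
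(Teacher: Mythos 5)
Your proposal is correct and follows essentially the same route as the paper: Faltings' theorem confines the primitive divisors to finitely many complete linear systems, primitivity rules out $\ell(D)\ge 3$ via a genus bound for the image curve in projective space (the paper's Theorem~\ref{thm:coolthm}, proved there by projecting to $\PP^2$ and comparing with the arithmetic genus of a plane degree $n$ curve), and primitivity plus the $(n,n)$-curve genus bound on $\PP^1\times\PP^1$ forces uniqueness of the linear system (the paper's Theorem~\ref{thm:singleRR}), with your factorisation-into-blocks argument being exactly the paper's Lemma~\ref{lem:critical}. The only difference is that you invoke Castelnuovo's bound and the Castelnuovo--Severi inequality as black boxes where the paper derives the two special cases it needs directly.
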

We call Theorem~\ref{thm:single} the \lq\lq Single Source Theorem\rq\rq,
since, with finitely many exceptions, all primitive degree $n$ points come
from a single source which the morphism $\varphi : C \rightarrow \PP^1$. 

\begin{thm}\label{thm:primitive}
Let $K$ be a number field. Let $C/K$ be a curve of genus $g$,
and write $J$ for the Jacobian of $C$.
Let $n \ge 2$ and suppose \eqref{eqn:glb} holds.
Suppose that either $J$ is simple,
or that $J(K)$ is finite.
Suppose $C$ has infinitely many degree $n$ points with Galois
group $S_n$ or $A_n$.
Then $C$ has only finitely many degree $n$
points with any other primitive Galois group. 
\end{thm}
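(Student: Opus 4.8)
The plan is to feed the hypothesis into the Single Source Theorem and then run a monodromy argument controlled by a genus estimate. The case $n=2$ is vacuous: $S_2$ is the only transitive subgroup of $S_2$, so there is no primitive Galois group other than $S_n$ and $A_n$. Assume then $n\ge 3$. Degree $n$ points with Galois group $S_n$ or $A_n$ are primitive, so $C$ has infinitely many primitive degree $n$ points and Theorem~\ref{thm:single} supplies a degree $n$ morphism $\varphi\colon C\to\PP^1$ over $K$ for which all but finitely many primitive degree $n$ divisors are fibres $\varphi^{*}(\alpha)$, $\alpha\in\PP^1(K)$. Let $M=\mon(\varphi)\le S_n$ be the arithmetic monodromy group, that is the Galois group of the Galois closure of $K(C)/K(\PP^1)$ acting on the $n$ sheets, and let $\widetilde{C}\to\PP^1$ be the corresponding Galois cover. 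For every $\beta\in\PP^1(K)$ outside the finite branch locus, $\Gal(\varphi^{*}(\beta)/K)$ is conjugate in $S_n$ to a subgroup of $M$. As at least one of $S_n$, $A_n$ occurs as such a Galois group, and $A_n$ is normal in $S_n$, we get $A_n\le M$, whence $M\in\{A_n,S_n\}$.

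Suppose for contradiction that $C$ has infinitely many degree $n$ points whose Galois group $G$ is primitive with $G\notin\{A_n,S_n\}$. Being primitive, all but finitely many of these divisors are fibres of the same $\varphi$, so there are infinitely many unramified $\beta\in\PP^1(K)$ with $\Gal(\varphi^{*}(\beta)/K)=G$. Since $G$ is transitive but is neither $A_n$ nor $S_n$, and $M\in\{A_n,S_n\}$, the group $G$ is a proper subgroup of $M$; fix a representative $G\le M$ of the relevant conjugacy class and form $C_G:=\widetilde{C}/G$ with its map $\varphi_G\colon C_G\to\PP^1$ of degree $[M:G]$. For unramified $\beta$ the decomposition group $D_\beta\le M$ is conjugate to $G$, and $\beta$ lifts to a $K$-point of $C_G$ exactly when $D_\beta$ is conjugate into $G$; hence each of our $\beta$ lifts, and $C_G(K)$ is infinite. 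There are only finitely many conjugacy classes of subgroups $G$, so it suffices to contradict this for one of them, and for that it is enough to show that every geometric component of $C_G$ has genus at least $2$ (passing to a finite extension of $K$ if needed): Faltings' theorem would then force $C_G(K)$ to be finite.

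The crux is the genus lower bound for $C_G$. Riemann--Hurwitz for $\varphi$ gives $2g-2=-2n+\sum_Q\ind(\sigma_Q)$, the $\sigma_Q\in M$ being the branch cycles on the $n$ sheets, so the hypothesis $g>(n-1)^2$ forces $\sum_Q\ind(\sigma_Q)>2n(n-1)$: the cover is heavily ramified. The same formula for $\varphi_G$ expresses $\genus(C_G)$ through the indices of these $\sigma_Q$ on the coset space $M/G$. For any nontrivial $\sigma$ on a set $\Omega$ one has $\ind_{\Omega}(\sigma)\ge\tfrac12\,|\Omega|\,(1-\fpr_{\Omega}(\sigma))$, so the Burness--Guralnick upper bounds on fixed point ratios of faithful primitive actions translate into lower bounds on the ramification each $\sigma_Q$ contributes to $\varphi_G$; concretely, they should make the normalised index of $\sigma_Q$ on $M/G$ at least a fixed positive multiple of $\ind(\sigma_Q)/n$. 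Summing over the branch points and feeding in the large total ramification of $\varphi$ then yields $\genus(C_G)\ge 2$. Making this precise---pinning down the primitive action to which the fixed point ratio bounds apply, and checking the resulting inequality uniformly over all admissible $G$---is the main obstacle, and is exactly where both the quantitative genus hypothesis and the theorems of Burness and Guralnick are indispensable.
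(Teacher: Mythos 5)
Your strategy coincides with the paper's: reduce to fibres of the single-source map $\varphi$, pass to the Galois closure $\tilde{C}\rightarrow\PP^1$, and for each offending primitive group force infinitely many rational points onto a quotient curve whose genus is bounded below via fixed point ratios. But the step you defer as ``the main obstacle'' is the actual content of the proof, and as sketched it does not close. The Burness--Guralnick bounds apply only to \emph{primitive faithful} actions, and $M$ acting on $M/G$ is primitive only when $G$ is maximal in $M$; your $G$ is merely a transitive proper subgroup. One must replace $G$ by a maximal overgroup $H^\prime$ (harmless, since $\tilde{C}/G\rightarrow\tilde{C}/H^\prime$ only decreases genus), and even then, when $M=S_n$ and $H^\prime$ is maximal in $A_n$ but not in $S_n$, the coset action of $S_n$ on $S_n/H^\prime$ is imprimitive and the theorem does not apply: the paper proves a separate bound $\fpr\le 3/4$ for exactly this configuration. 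One must also exclude the exceptional subset actions $\Omega_\ell$, which is where transitivity of $H^\prime$ enters. The quantitative bookkeeping matters too: the paper converts $g>(n-1)^2$ into the statement that $\varphi$ has at least $2n+1$ branch values, and each branch cycle contributes index at least $[G:H^\prime]/8$ on the coset space; this yields genus $\ge 2$ only for $n\ge 8$, and the cases $n=5,6,7$ (including $S_6$, which Burness--Guralnick excludes altogether) are finished by explicit machine enumeration. Your proposed inequality, that the normalised index on $M/G$ exceeds a fixed multiple of $\ind(\sigma_Q,\{1,\dots,n\})/n$, is not strong enough to settle these small $n$. Finally, the statement is vacuous for all $n\le 4$, not just $n=2$, which is how the paper licenses the standing hypothesis $n\ge 5$ that Burness--Guralnick requires.

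A second, more structural gap is the arithmetic versus geometric monodromy bookkeeping. The branch cycles $\sigma_Q$ and the Riemann--Hurwitz computation for the quotient live in $G=\Gal(\KK/L(\PP^1))$, where $L=\KK\cap\overline{K}$ is the constant field extension, whereas the Galois group $I$ of a fibre over $K$ sits in the arithmetic group $G^\prime$. What actually indexes the quotient curve carrying the rational points is not $I$ but its normal subgroup $J$, the Galois group of the fibre over $L$; one needs $J$ to be transitive and different from $A_n$ and $S_n$, and transitivity is precisely where the primitivity of $I$ is used (a normal subgroup of a primitive group acts transitively or trivially, and triviality is excluded by discarding the finitely many fibres meeting $C(L)$). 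The paper then shows $L/K$ is at most quadratic and counts $L$-points, not $K$-points, on the quotient curves. Your ``passing to a finite extension of $K$ if needed'' gestures at this, but the transitivity of the group defining the quotient is not automatic and must be extracted from primitivity; without it the whole genus argument (which needs $H^\prime$ transitive to avoid the $\Omega_\ell$ exception) collapses.
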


We point out that, in both Theorems~\ref{thm:single} and~\ref{thm:primitive},
we may replace the condition \lq $J$ is simple or $J(K)$ is finite\rq\
with the weaker condition \lq $W_n(C)$ does not contain the translate
of a positive rank abelian variety\rq. Here $W_n(C)$ is the image of $C^{(n)}$,
the $n$-symmetric power of $C$, in $J$ under an Abel--Jacobi map.

\medskip

We mention two intermediate results that may be of independent interest.
\begin{thm}\label{thm:coolthm}
Let $K$ be a perfect field. 
Let $n \ge 2$. Let $C$ be a curve of genus $g$ defined over $K$.
Suppose
\begin{equation}\label{eqn:gt}
g \; > \; \frac{(n-1)(n-2)}{2}.
\end{equation}
Let $D$ be a primitive degree $n$ divisor on $C$. Then
$\ell(D) \le 2$.
\end{thm}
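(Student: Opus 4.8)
The plan is to argue by contradiction: assuming $\ell(D)\ge 3$, I will produce a plane model of $C$ of degree $n$, whose geometric genus is at most $(n-1)(n-2)/2$, contradicting \eqref{eqn:gt}. Here $\ell(D)=\dim_K L(D)$ with $L(D)=\{f\in K(C)^\times : \divv(f)+D\ge 0\}\cup\{0\}$. First I would record the basic consequence of irreducibility: if $f\in L(D)$ is non-constant then $\divv_\infty(f)\le D$, and since $\divv_\infty(f)$ and $D-\divv_\infty(f)$ are both rational and effective while $D$ is irreducible (it is the Galois orbit of a single point), we must have $\divv_\infty(f)=D$. Thus every non-constant $f\in L(D)$ has pole divisor exactly $D$ and defines a degree $n$ morphism $C\to\PP^1$.

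Next I would pick $1,f,h\in L(D)$ linearly independent over $K$, with $f,h$ non-constant. Let $s_0$ be the section of $\OO_C(D)$ with $\divv(s_0)=D$, so that $s_0,\,f s_0,\,h s_0$ are global sections with divisors $D$, $\divv_0(f)$, $\divv_0(h)$ (using $\divv_\infty(f)=D$). Since the poles of $f$ are exactly $D$, the divisors $D$ and $\divv_0(f)$ are disjoint, so already $s_0$ and $f s_0$ have no common zero; hence $\Phi=[1:f:h]\colon C\to\PP^2$ (over $K$) is a morphism with $\deg\Phi^{*}\OO(1)=\deg D=n$. Writing $\Phi$ as $C\xrightarrow{\psi}C'\hookrightarrow\PP^2$ with $C'=\Phi(C)$ irreducible, and setting $\delta=\deg(C'\subset\PP^2)$, $m=\deg\psi$, I get $m\delta=n$. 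As $1,f,h$ are linearly independent, $C'$ is nondegenerate, so $\delta\ge 2$; and since $f$ is the ratio of the first two coordinates, the degree $n$ morphism attached to $f$ factors as $C\xrightarrow{\psi}C'\xrightarrow{\bar\pi}\PP^1$ with $\deg\bar\pi=\delta$.

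The crux is to bring in primitivity. Put $Q=\psi(P)\in C'(\overline K)$; since $\psi$ is defined over $K$ we have $K\subseteq K(Q)\subseteq K(P)$, so $K(Q)$ is an intermediate field of $K(P)/K$. Primitivity of $P$ means $K(P)/K$ has no proper intermediate field, whence $[K(Q):K]\in\{1,n\}$. The conjugates of $Q$ are distinct points of the fibre $\bar\pi^{-1}(\bar\pi(Q))$ of the degree $\delta$ map $\bar\pi$, so $[K(Q):K]\le\delta$. If $[K(Q):K]=n$ then, combined with $\delta\le n$, this forces $\delta=n$. If instead $[K(Q):K]=1$, then $Q\in C'(K)$, and by $G_K$-equivariance all $n$ conjugates of $P$ lie in $\psi^{-1}(Q)$; but that fibre has at most $m=\deg\psi$ points, forcing $n\le m=n/\delta$, contradicting $\delta\ge 2$. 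Either way $\delta=n$, so $m=1$ and $\psi\colon C\to C'$ is birational.

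Therefore $C'$ would be a (possibly singular) plane curve of degree $n$ whose normalisation is $C$, and so $g=\genus(C)\le (n-1)(n-2)/2$, contradicting \eqref{eqn:gt}; this proves $\ell(D)\le 2$. I expect the only delicate point to be that the fibre of $\psi$ (resp.\ $\bar\pi$) over the special point coming from $P$ could a priori be smaller than the generic fibre, so that the field degree $[K(Q):K]$ need not equal the generic cover degree $\delta$. This is exactly what the two inequalities $[K(Q):K]\le\delta$ and $\#\psi^{-1}(Q)\le m$ are for: together they pin down the two extremes allowed by primitivity and rule out the degenerate case, which is morally the statement that a primitive point forces the cover $\varphi$ to be indecomposable.
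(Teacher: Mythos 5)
Your strategy is the same as the paper's (assume $\ell(D)\ge 3$, map to $\PP^2$ by $(1:f:h)$, force the image $C'$ to be a degree $n$ plane model of $C$, and contradict \eqref{eqn:gt}), but you exploit primitivity differently: the paper passes to the normalization of $C'$ and observes that the fibres over the points of $v^*(\infty)$ give a Galois-stable partition of the $n$ points of $D$ into $s$ blocks of size $r$, so primitivity forces $r=1$ or $s=1$; you instead use the equivalent field-theoretic statement that $K(P)/K$ has no proper intermediate field and apply it to $K(Q)$ with $Q=\psi(P)$. That reformulation is legitimate (it is Lemma~\ref{lem:primmax} read through the Galois correspondence), and your first case ($[K(Q):K]=n$, using $[K(Q):K]\le\delta$ via B\'ezout applied to $C'\cap\{x_0=0\}$) is fine. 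The gap is in the second case.

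The step that fails is the assertion that the fibre $\psi^{-1}(Q)$ of the degree $m$ map $\psi\colon C\to C'$ has at most $m$ points. Here $C'$ is merely the image in $\PP^2$ and may well be singular at $Q$; for a finite map from a smooth curve onto a singular curve, the fibre over a singular point can have up to $m\cdot b$ points, where $b$ is the number of branches of $C'$ at $Q$ (already the degree $1$ normalization of a nodal cubic has a two-point fibre over the node). Worse, the configuration you are trying to exclude --- all $n$ conjugates of $P$ lying over a single $Q\in C'(K)$ while $m=n/\delta<n$ --- can only occur when $Q$ is a singular point, i.e.\ precisely when your bound breaks down, so the contradiction is not established as written. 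The conclusion is still reachable: $b$ is at most the multiplicity of $Q$ on $C'$, which is at most $\delta-1$ for an irreducible plane curve of degree $\delta\ge 2$, whence $\#\psi^{-1}(Q)\le m(\delta-1)=n-m<n$; alternatively, if all the $P_i$ map to the single point $(0:a:b)$ then the polar parts of $f$ and $h$ at every $P_i$ are proportional to $(a,b)$, so $bf-ah$ has no poles and is constant, contradicting the independence of $1,f,h$. The cleanest repair, and the one the paper uses, is to insert the normalization of $C'$ between $C$ and $\PP^1$ from the outset so that every fibre count takes place on smooth curves.
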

Here $L(D)$ donotes the Riemann--Roch space associated
ot $D$, and $\ell(D)$ denotes its dimension.
We believe that Theorem~\ref{thm:coolthm} is the 
first ever example of a relationship
between the Galois group of a divisor,
and its Riemann--Roch dimension.
\begin{thm}\label{thm:singleRR}
Let $K$ be a perfect field.
Let $C/K$ be a curve of genus $g$. Let $n \ge 2$.
Let $D_1$, $D_2$ be two primitive degree $n$ divisors on $C$
with $\ell(D_1)=\ell(D_2)=2$. Suppose 
\begin{equation}\label{eqn:gt2}
g>(n-1)^2.
\end{equation}
Then $D_1$, $D_2$ are linearly equivalent.
\end{thm}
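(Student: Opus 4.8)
The plan is to attach to each $D_i$ a degree $n$ map to $\PP^1$ and to study the product map into $\PP^1\times\PP^1$, playing an adjunction genus bound against the primitivity hypothesis. First I would check that each pencil $|D_i|$ is base point free. Indeed, the base locus $B_i$ of $|D_i|$ is a rational divisor with $0\le B_i\le D_i$; since $D_i$ is a single Galois orbit, its only rational sub-divisors are $0$ and $D_i$, and $B_i=D_i$ would force $\ell(D_i)=1$. Hence $B_i=0$, and the linear system $|D_i|$, which is defined over $K$ because $D_i$ is rational, gives a morphism $\varphi_i\colon C\to\PP^1$ of degree exactly $n$, defined over $K$, with $D_i=\varphi_i^{*}(\alpha_i)$ for a rational point $\alpha_i\in\PP^1(K)$; moreover $\varphi_i^{-1}(\alpha_i)$ is reduced, so $\alpha_i$ is not a branch point of $\varphi_i$.

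Next I would form the product morphism $\Phi=(\varphi_1,\varphi_2)\colon C\to\PP^1\times\PP^1$, let $\Gamma=\Phi(C)$ be its (geometrically irreducible) image, let $\psi\colon C\to C'$ be the induced map to the normalisation $C'$ of $\Gamma$, and set $e=\deg\psi$. Writing $\mathrm{pr}_1,\mathrm{pr}_2$ for the two projections, we have $\varphi_i=(\mathrm{pr}_i|_{C'})\circ\psi$, so each projection $C'\to\PP^1$ has degree $n/e$ and $\Gamma$ has bidegree $(n/e,n/e)$; in particular $e\mid n$. I would then run a trichotomy on $e$.

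If $e=1$, then $\psi$ is birational, so $g=\genus(C')$, and adjunction on $\PP^1\times\PP^1$ gives $\genus(C')\le p_a(\Gamma)=(n-1)^2$, contradicting \eqref{eqn:gt2}. If $1<e<n$, then $\varphi_1=(\mathrm{pr}_1|_{C'})\circ\psi$ is a nontrivial $K$-rational factorisation, which I would use to contradict primitivity: the $n$ geometric points of $D_1=\varphi_1^{*}(\alpha_1)$ form a single $G_K$-orbit, and $\psi$ sends them $G_K$-equivariantly onto the $n/e$ points of the reduced fibre $(\mathrm{pr}_1|_{C'})^{-1}(\alpha_1)$ on $C'$; the fibres of this map partition the orbit into $n/e$ blocks of size $e$, a nontrivial $G_K$-stable partition, contradicting the primitivity of $\Gal(P/K)$, where $D_1$ is the orbit of $P$. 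Hence $e=n$.

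Finally, when $e=n$ the curve $\Gamma$ has bidegree $(1,1)$, so it is the graph of an automorphism $\sigma\in\Aut(\PP^1)$ with $\varphi_2=\sigma\circ\varphi_1$; thus $\varphi_1$ and $\varphi_2$ cut out the same pencil on $C$, and $D_1$, $D_2$, being two members of it, are linearly equivalent. The engine of the argument is the adjunction bound in the case $e=1$, which is precisely where hypothesis \eqref{eqn:gt2} is consumed; the delicate point, and the main obstacle, is the middle case, where one must verify that the fibre over $\alpha_1$ stays reduced along the factorisation (so that the blocks genuinely have size $e$ with $1<e<n$) in order to convert a nontrivial cover factorisation over $K$ into a genuine failure of primitivity.
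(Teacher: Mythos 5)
Your proof is correct and follows essentially the same route as the paper: both map $C$ into $\PP^1\times\PP^1$ via the two degree~$n$ pencils, rule out the birational case using the arithmetic genus $(n-1)^2$ of an $(n,n)$-curve, use primitivity of $D_1$ to exclude intermediate factorisation degrees, and conclude from the bidegree $(1,1)$ case that the two pencils differ by an automorphism of $\PP^1$. The only difference is presentational: the paper isolates your middle-case block argument as Lemma~\ref{lem:critical}, and the point you flag as delicate is settled there by the same counting (the $n$ distinct points of the reduced fibre $D_1$ are forced to distribute as exactly $n/e$ blocks of size exactly $e$).
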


\medskip

The paper is structured as follows. 
In Section~\ref{sec:primact} we review
some standard results on primitive
group actions that are needed later in the paper.
In Section~\ref{sec:coolthm}
we prove Theorem~\ref{thm:coolthm}: if $\ell(D) \ge 3$ and $D$ is primitive
then we show that $C$ is birational to a plane degree $n$
curve which contradicts \eqref{eqn:gt}. In Section~\ref{sec:singleRR}
we prove Theorem~\ref{thm:singleRR}: if $D_1$, $D_2$ 
are inequivalent and primitive then we show that $C$
is birational to an $(n,n)$-curve on $\PP^1 \times \PP^1$
contradicting \eqref{eqn:gt2}. In Section~\ref{sec:singleproof}
we show that Theorem~\ref{thm:single} follows from Theorems~\ref{thm:coolthm},
~\ref{thm:singleRR} and a famous theorem of Faltings on rational points
lying on subvarieties of abelian varieties. In Section~\ref{sec:minimalindex}
we recall the concept of the minimal index of a group action,
and we use recent work of Burness and Guralnick
to study the minimal index of the natural actions of $S_n$ and
$A_n$ on the coset spaces $S_n/H$ and $A_n/H$.
The next two sections (Section~\ref{sec:monodromy}
and Section~\ref{sec:proof}) are devoted to deducing
Theorem~\ref{thm:primitive} from Theorem~\ref{thm:single}.
Indeed Theorem~\ref{thm:single} allows us
to focus on the fibres of a single degree $n$
morphism $\varphi : C \rightarrow \PP^1$ defined over $K$.
We show that this morphism has \lq generic Galois group\rq\ $A_n$ or $S_n$,
and to prove Theorem~\ref{thm:primitive}
it will be enough to show that only finitely many fibres have
primitive Galois groups $\ne A_n$, $S_n$. The map $\varphi$
is not in general a Galois cover, and we will need
 to consider the \lq geometrically connected Galois closure\rq\ 
$\tilde{C} \rightarrow \PP^1$,
which is defined over an extension $L$ of $K$. We show,
using the simplicity of $A_n$, that either $L=K$
or $L/K$ is quadratic. The fibres $\varphi^*(\alpha)$,
for $\alpha \in \PP^1(L)$,
which have any given Galois group $H$ give rise to $L$-points
on some subcover $D_H/L$ of $\tilde{C}$. The proof 
of Theorem~\ref{thm:primitive} boils down to showing
that all these $D_H$ have genus $\ge 2$ and hence
finitely many $L$-points by Faltings' theorem.
We give a formula (Lemma~\ref{lem:monodromy})
for the genus of $D_H$ in terms 
of element indices for a certain coset action. The
proof of this formula makes use
of recent work of Lombardo, Lorenzo Garc\'{\i}a, Ritzenthaler
and Sijsling who determine the ramification data
for the cover $D_H \rightarrow \PP^1$ in terms of
the monodromy data of the original cover $C \rightarrow \PP^1$.
The group theory results of Section~\ref{sec:minimalindex} on minimal
index allow us to deduce that all our $D_H$ have
genus $\ge 2$, and so to complete the proof of Theorem~\ref{thm:primitive}.

\medskip

We are grateful to Gareth Tracey for useful discussions,
and for drawing our attention to the work
of Burness and Guralnick \cite{BurnessGuralnick}
on fixed point ratios.

\section{Primitive Group Actions}\label{sec:primact}
In this section we review some properties of primitive
group actions. This is standard material
(e.g. \cite{DixonMortimer}), and is included
for the convenience of the reader.
Let $G$ be a group acting transitively on a finite set $\Omega$.
Let $\cP$ be a partition of $\Omega$. We say that $\cP$
is $G$-stable if $\sigma(Y) \in \cP$ for all $Y \in \cP$
and all $\sigma \in G$.
We say that the action of $G$ on $\Omega$ is \textbf{primitive}
if the only $G$-stable partitions of $\Omega$
are the trivial ones: $\{\Omega\}$ and $\{ \{\omega\} : \omega \in \Omega \}$.
Here is an equivalent formulation: the action of $G$ on $\Omega$
is imprimitive if and only if there is some $Y \subset \Omega$
such that $2 \le \# Y < \# \Omega$, and for all $\sigma \in G$
either $\sigma (Y)=Y$ or $\sigma(Y) \cap Y = \emptyset$.

\begin{lem}\label{lem:2trans}
Suppose the action of  $G$ on $\Omega$ is $2$-transitive.
Then the action is primitive.
\end{lem}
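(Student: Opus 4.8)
The plan is to argue by contraposition: I will show that if the action is imprimitive, then it cannot be $2$-transitive. The natural tool is the equivalent ``block'' formulation of imprimitivity recalled just above the statement, namely that imprimitivity of the action of $G$ on $\Omega$ means there exists $Y \subset \Omega$ with $2 \le \# Y < \# \Omega$ such that for every $\sigma \in G$ either $\sigma(Y) = Y$ or $\sigma(Y) \cap Y = \emptyset$. I prefer this reformulation to working directly with $G$-stable partitions, since a single block $Y$ is easier to manipulate than a whole partition.

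First I would fix such a nontrivial block $Y$. Because $\# Y \ge 2$, I can choose two distinct points $\alpha_1, \alpha_2 \in Y$; because $\# Y < \# \Omega$, I can choose a point $\beta \in \Omega - Y$. The key step is then to exploit $2$-transitivity to move $\alpha_2$ outside $Y$ while pinning $\alpha_1$ in place: since $(\alpha_1, \alpha_2)$ and $(\alpha_1, \beta)$ are both ordered pairs of distinct elements of $\Omega$, there is $\sigma \in G$ with $\sigma(\alpha_1) = \alpha_1$ and $\sigma(\alpha_2) = \beta$.

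Finally I would derive the contradiction. On the one hand $\alpha_1 = \sigma(\alpha_1) \in \sigma(Y)$ and $\alpha_1 \in Y$, so $\sigma(Y) \cap Y \ne \emptyset$; by the block property this forces $\sigma(Y) = Y$. On the other hand $\alpha_2 \in Y$ gives $\beta = \sigma(\alpha_2) \in \sigma(Y) = Y$, contradicting $\beta \notin Y$. Hence no nontrivial block exists and the action is primitive. There is no genuine obstacle here; the only point requiring a little care is the very first one, namely invoking the block characterisation of imprimitivity correctly (in particular keeping track of the constraints $2 \le \#Y < \#\Omega$, which are exactly what guarantee that both $\alpha_1 \ne \alpha_2$ in $Y$ and $\beta \in \Omega - Y$ can be chosen). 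Everything else is a direct application of the defining property of a $2$-transitive action.
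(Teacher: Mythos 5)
Your proof is correct and is essentially the same as the paper's: both use $2$-transitivity to find $\sigma$ fixing one point of the block $Y$ while sending a second point of $Y$ to a target outside (or to an arbitrary target), then invoke the block property to force that target into $Y$. The only difference is presentational — you argue by contradiction from a proper block, while the paper argues directly that any block with at least two elements must equal $\Omega$.
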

\begin{proof}
Let $Y$ be a subset of $\Omega$ with at least two elements
and suppose that for all $\sigma \in G$,
either $\sigma(Y)=Y$ or $\sigma(Y) \cap Y=\emptyset$.
We want to show that $Y=\Omega$. 
Let $c \in \Omega$ and we want to show that $c \in Y$.
Let $a$, $b \in Y$
be distinct. We may suppose $c \ne a$, $b$.
As $G$ is $2$-transitive on $\Omega$,
 there is some $\sigma \in G$ such that $\sigma(a)=a$
and $\sigma(b)=c$. As $a \in Y \cap \sigma(Y)$
we have $Y=\sigma(Y)$ and so $c \in Y$.
\end{proof}
It follows from Lemma~\ref{lem:2trans}
that $S_n$ is primitive for all $n$,
and $A_n$ is primitive for $n \ge 3$.

\begin{lem}\label{lem:primmax}
Suppose $\lvert \Omega \rvert \ge 2$.
The action of $G$ on $\Omega$ is primitive if and only
if $\Stab(\omega)$ is maximal for any (and hence all)
$\omega \in \Omega$.
\end{lem}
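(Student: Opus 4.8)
The plan is to establish the classical bijection between the blocks of the action containing a fixed point and the subgroups of $G$ lying between the point stabiliser and $G$ itself, and then to read off the statement. First I would observe that since $G$ acts transitively, any two point stabilisers are conjugate; as maximality of a subgroup is preserved under conjugation, it suffices to prove the equivalence for a single $\omega \in \Omega$, which settles the parenthetical \lq any (and hence all)\rq. So fix $\omega \in \Omega$ and set $H = \Stab(\omega)$.

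Recall from the equivalent formulation of primitivity given above that the action is imprimitive precisely when there is a \emph{block}, i.e. a subset $Y \subseteq \Omega$ with $2 \le \# Y < \# \Omega$ such that $\sigma(Y) = Y$ or $\sigma(Y) \cap Y = \emptyset$ for every $\sigma \in G$. I would restrict attention to blocks $B$ containing $\omega$ and define two maps. To a block $B \ni \omega$ I associate its setwise stabiliser $M_B = \{\sigma \in G : \sigma(B) = B\}$, which is visibly a subgroup of $G$ containing $H$. To a subgroup $M$ with $H \le M \le G$ I associate the orbit $B_M = M\omega$.

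The core of the argument is to check that these two assignments are mutually inverse, inclusion-preserving maps between the set of blocks containing $\omega$ and the set of subgroups $M$ with $H \le M \le G$. The step needing the most care is verifying that $B_M = M\omega$ really is a block: if $\sigma(M\omega) \cap M\omega \ne \emptyset$ then $\sigma m_1 \omega = m_2 \omega$ for some $m_1, m_2 \in M$, whence $m_2^{-1}\sigma m_1 \in H \le M$ and so $\sigma \in M$, giving $\sigma(M\omega) = M\omega$; here the hypothesis $H \le M$ is essential. The remaining verifications are routine: $M_{B_M} = M$ follows because $\sigma(M\omega) = M\omega$ forces $\sigma\omega \in M\omega$ and hence $\sigma \in M$, while $B_{M_B} = B$ follows from transitivity, since any $b \in B$ equals $\sigma\omega$ for some $\sigma$, and then $\sigma(B) \cap B \ni b$ forces $\sigma \in M_B$.

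Finally I would specialise the correspondence to the extremes: the block $\{\omega\}$ corresponds to the subgroup $H$, and the block $\Omega$ corresponds to $G$. Hence the nontrivial blocks containing $\omega$ correspond exactly to the subgroups $M$ with $H < M < G$. Therefore the action admits no nontrivial block — equivalently, by the formulation above, it is primitive — if and only if there is no subgroup strictly between $H$ and $G$, that is, if and only if $H = \Stab(\omega)$ is maximal, completing the proof.
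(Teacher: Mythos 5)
Your proof is correct and rests on the same two constructions as the paper's: the paper proves the two implications directly by sending an intermediate subgroup $H \subsetneq M \subsetneq G$ to the block $M\omega$ and a nontrivial block $Y \ni \omega$ to its setwise stabiliser, with exactly the verifications you carry out. You merely package these as a full inclusion-preserving bijection before reading off the extremes, which is a tidier organisation of the same argument rather than a different route.
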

\begin{proof}
As the action is transitive, any two point stabilizers are
conjugate, and thus if one is maximal then so are all of them.
Let $\omega \in \Omega$. As $\lvert \Omega \rvert \ge 2$,
the stabilizer $\Stab(\omega)$ is a proper subgroup of $G$.
Suppose it is non-maximal, and let $\Stab(\omega) \subsetneq H \subsetneq G$
be a subgroup.
Let $Y=H\omega$. Then 
\begin{equation}\label{eqn:max}
2 \le \underbrace{[H:\Stab(\omega)]}_{\#Y} < [G:\Stab(\omega)] =\#\Omega.
\end{equation}
Suppose $\sigma \in G$ and  $Y \cap \sigma (Y) \ne \emptyset$.
Then, there are $h_1$, $h_2 \in H$ such that $h_1 \omega=\sigma h_2 \omega$,
and so $h_1^{-1} \sigma h_2 \in \Stab(\omega) \subset H$,
so $\sigma \in H$, and hence $\sigma(Y)=(\sigma H) \omega=H \omega=Y$.
Therefore the action is imprimitive.

Conversely, suppose the action is imprimitive, so there is some $Y \subset
\Omega$ satisfying $2 \le \# Y <\#\Omega$, and for all $\sigma \in G$,
either $\sigma(Y) \cap Y=\emptyset$ or $\sigma(Y)=Y$.
Let $\omega \in Y$ and let $H=\{ \tau \in G : \tau(Y)=Y\}$.
If $\sigma \in \Stab(\omega)$ then $\omega \in Y \cap \sigma(Y)$
so $\sigma(Y)=Y$ and so $\sigma \in H$. Hence $\Stab(\omega) \subseteq H$.
Moreover, as $H$ acts transitively on the elements of $Y$,
we have $[H:\Stab(\omega)]=\#Y$, so \eqref{eqn:max} holds,
and therefore $\Stab(\omega)$ is non-maximal.
\end{proof}

\begin{lem}\label{lem:primnorm}
Suppose $G$ acts primitively on $\Omega$. Let $N$ be a normal
subgroup of $G$. Then $N$ acts either transitively 
or trivially on $\Omega$.
\end{lem}
\begin{proof}
We may suppose $\# \Omega \ge 2$.
Let $\omega \in \Omega$. By Lemma~\ref{lem:primmax},
the stabilizer $\Stab(\omega)$ is maximal.
Let 
\[
H=N \Stab(\omega)=\{n k : n \in N,~k\in \Stab(\omega) \}.
\] 
As $N$ is normal, $H$ is a subgroup of $G$,
and since $\Stab(\omega)$ is maximal,
$H=\Stab(\omega)$ or $H=G$. 
Suppose first that $H=G$. Then 
$\Omega=G\omega=H\omega=N \omega$,
so $N$ acts transitively.
Suppose instead that $H=\Stab(\omega)$. Then
$N \subseteq \Stab(\omega)$. As $N$ is normal and all point
stabilizers are conjugate, we see that $N$ is contained
in all point stabilizers and so acts trivially.
\end{proof}

\section{Primitivity and Riemann--Roch Dimension}\label{sec:coolthm}
In this section we prove Theorem~\ref{thm:coolthm}.
\begin{lem}\label{lem:critical}
Let $K$ be a perfect field and let $C/K$
be a curve.  Let $D$ be a primitive divisor. 
Let $f \in L(D)$ be non-constant.
Suppose there is a (possibly singular)
curve $C^\prime/K$, and rational maps
$\varphi: C \dashrightarrow C^\prime$ and $\psi: C^\prime\dashrightarrow \PP^1$
defined over $K$ such that $f=\psi \circ \varphi$. Then
$\deg(\varphi)=1$ or $\deg(\psi)=1$.
\end{lem}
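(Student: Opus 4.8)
The plan is to realize the divisor $D$ as an honest fibre of the morphism $f \colon C \to \PP^1$ determined by $f$, and then to exploit primitivity to forbid any nontrivial refinement of that fibre arising from the factorization $f = \psi \circ \varphi$.

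First I would pin down the polar divisor of $f$. Since $f \in L(D)$ is non-constant, its polar divisor satisfies $\divv_\infty(f) \le D$; as $D = P_1 + \cdots + P_n$ is the reduced Galois orbit of the primitive point $P$, this forces $\divv_\infty(f) = \sum_{i \in S} P_i$ for some $S \subseteq \{1,\dots,n\}$. Because $f$ is defined over $K$, the divisor $\divv_\infty(f)$ is $G_K$-stable, so $S$ is a $G_K$-stable subset of a single Galois orbit; being non-empty (as $f$ is non-constant) it must be everything. Hence $\divv_\infty(f) = D$, the map $f$ is unramified over $\infty$ with $\deg f = n$, and $f^{-1}(\infty) = \{P_1,\dots,P_n\}$ is exactly the orbit of $P$.

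Next I would make the target of $\varphi$ smooth. Replacing $C'$ by its normalization $\widetilde{C'}$, which is a smooth projective curve defined over $K$ precisely because $K$ is perfect, the rational map $\varphi$ becomes a finite morphism $\varphi \colon C \to \widetilde{C'}$ and $\psi$ a finite morphism $\psi \colon \widetilde{C'} \to \PP^1$, both over $K$, with unchanged degrees and $f = \psi \circ \varphi$. Each $P_i$ maps under $\varphi$ into $\psi^{-1}(\infty)$, and I partition $\{P_1,\dots,P_n\}$ into blocks according to which point of $\psi^{-1}(\infty)$ each $P_i$ hits. Since $\varphi$ and $\psi$ are defined over $K$, this partition is stable under $\Gal(P/K)$: if $\varphi(P_i) = \varphi(P_j)$ then $\varphi(P_{\sigma(i)}) = \sigma\varphi(P_i) = \sigma\varphi(P_j) = \varphi(P_{\sigma(j)})$ for every $\sigma \in G_K$. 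By primitivity the partition is trivial. If every block is a singleton, then $\psi^{-1}(\infty)$ contains $n$ distinct points, so $\deg\psi \ge n$; combined with $\deg\varphi \cdot \deg\psi = \deg f = n$ this forces $\deg\psi = n$ and $\deg\varphi = 1$. If instead there is a single block, all $n$ points $P_i$ share one image $Q$ under $\varphi$, so $\varphi^{-1}(Q) \supseteq \{P_1,\dots,P_n\}$ gives $\deg\varphi \ge n$, whence $\deg\varphi = n$ and $\deg\psi = 1$. Either way the conclusion holds.

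The heart of the argument, that primitivity kills the block partition, is immediate once the set-up is in place; alternatively one could phrase it through the correspondence between intermediate fields of $K(C)/K(f)$ and subgroups lying between a point stabilizer and the full group, invoking the maximality criterion of Lemma~\ref{lem:primmax}. I expect the only genuine care to be needed in the two bookkeeping steps above: establishing $\divv_\infty(f) = D$ so that $D$ really appears as a fibre, and descending the normalization to $K$ so that all fibre-counting happens on smooth curves and the $G_K$-action is respected. This last point is exactly where perfectness of $K$ is used.
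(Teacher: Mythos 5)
Your proposal is correct and follows essentially the same route as the paper: establish $\divv_\infty(f)=D$ from irreducibility of $D$, pass to the normalization of $C'$ over $K$, observe that the fibres of $\psi$ over $\infty$ induce a Galois-stable partition of the points of $D$, and let primitivity force that partition to be trivial. The only cosmetic difference is in the final count (you argue via $\#\psi^{-1}(\infty)\ge n$ or $\deg\varphi\ge n$, while the paper notes the partition consists of $\deg\psi$ blocks of size $\deg\varphi$), which amounts to the same thing.
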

\begin{proof}
Write $\divv_\infty(f)$ for the divisor of poles of $f$.
As $f$ is non-constant and belongs to $L(D)$
we have $0<\divv_\infty(f) \le D$. However, $D$ is irreducible,
therefore $\divv_\infty(f)=D$.

Now let $\pi : C^{\prime\prime} \rightarrow C^\prime$ be
the normalization of $C^\prime$. The map $\pi$ is birational,
and we write $u=\pi^{-1} \circ \varphi$, and $v=\psi \circ \pi$.
As $C$ and $C^{\prime\prime}$ are proper, $u : C \rightarrow C^{\prime\prime}$
and $v: C^{\prime \prime} \rightarrow \PP^1$ are morphisms
defined over $K$.
Consider the following commutative diagram.
\[
\begin{tikzcd}
& C^{\prime\prime} \arrow[d, "\pi"] \arrow[rd,"v"]& \\
C \arrow[ru,"u"] \arrow[r, "\varphi" below, dashed] & C^\prime \arrow[r, "\psi" below, dashed] & \PP^1
\end{tikzcd}
\]
We note that $f=\psi \circ \varphi=v \circ u$.
In particular, 
	$D=f^{*}(\infty)=u^{*} (v^*(\infty))$.

Write $r=\deg(u)$ and $s=\deg(v)$.
Write $v^*(\infty)=Q_1+\cdots+Q_s$. 
Note that
\[
	\{ u^{-1}(Q_i) : i \in \{1,\dotsc,s\}\}
\]
is a partition of the points in $D$, into $s$ subsets of size $r$, that is Galois-stable.
As $D$ is primitive, either $r=1$ or $s=1$. However $r=\deg(\varphi)$ and $s=\deg(\psi)$,
completing the proof.
\end{proof}

\begin{proof}[Proof of Theorem~\ref{thm:coolthm}]
Suppose $\ell(D) \ge 3$. Then there are $f$, $g \in K(C)$
such that $1$, $f$, $g$ are linearly independent
elements of $L(D)$. 
Let $V$ be the subspace of $L(D)$ spanned by $1$, $f$, $g$,
and consider the corresponding linear system
\begin{equation}\label{eqn:lin}
	\{D+\divv(h) \; : \;  h \in V\}.
\end{equation}
We claim that  \eqref{eqn:lin} is base point free.
Indeed, let $D_0$ be the base locus of \eqref{eqn:lin}.
Thus $D_0$ is a $K$-rational divisor
and $D_0 \le D$. Since $D$ is irreducible, either $D_0=0$
or $D_0=D$. If $D_0=D$ then all elements of the linear system \eqref{eqn:lin}
are equal to $D$, which makes all $h \in V$ constant, giving a contradiction.
Thus $D_0=0$ establishing our claim.
We let
\[
\varphi : C \xrightarrow{|V|} \PP^2, \qquad \varphi=(f : g : 1),
\]
and let $C^\prime$ be the 
image of $C$ in $\PP^2$ under $\varphi$, which is a geometrically irreducible
curve defined over $K$, but may be singular.
We also denote by $\varphi$ the morphism $C \rightarrow C^\prime$.
Write 
\[
	\psi : C^\prime \rightarrow \PP^1, \qquad \psi(x:y:z)=(x:z).
\]
Then $\psi \circ \varphi=f$. 
By the argument at the beginning of the proof of Lemma~\ref{lem:critical},
we have $\divv_\infty(f)=D$, so $\deg(f)=\deg(D)=n$.
Moreover, for any $(a:b) \in \PP^1$, we observe that
$\psi^*(a:b)$ is the hyperplane
section $C^\prime \cap \{bx-az=0\}$, and so $n/\deg(\varphi)=\deg(\psi)$
equals the degree of $C^\prime$ as plane curve.
Applying Lemma~\ref{lem:critical} to $\psi \circ \varphi=f$ gives
$\deg(\varphi)=1$ or $\deg(\psi)=1$.

However, if $\deg(\psi)=1$ then
$C^\prime$ is a line which
contradicts the linear independence of $1$, $f$, $g$.
Thus $\deg(\varphi)=1$, and so the map $\varphi : C \rightarrow C^\prime$
is birational. 
Hence the geometric genus
of $C^\prime$ is $g$.  Since $C^\prime$ has degree $n$,
its arithmetic genus is
$(n-1)(n-2)/2$. As the geometric genus is
bounded by the arithmetic genus, we have
that $g \le (n-1)(n-2)/2$. This contradicts \eqref{eqn:gt}.
\end{proof}

\section{Proof of Theorem~\ref{thm:singleRR}}\label{sec:singleRR}

As $\ell(D_i)=2$, we may choose non-constant $f_i \in L(D_i)$.
Since $D_i$ are irreducible,
	$\divv_\infty(f_i)=D_i$ as before. In particular, $\deg(f_i)=n$.
Let 
\[
	\varphi \; : \; C \rightarrow \PP^1 \times \PP^1, \qquad \varphi=(f_1,f_2),
\]
and let $C^\prime=\varphi(C)$. Then $C^\prime/K$ is an irreducible
but possibly singular curve lying on $\PP^1 \times \PP^1$;
we also denote the map $C \rightarrow C^\prime$ by $\varphi$.
Let $\pi_1$, $\pi_2 : \PP^1 \times \PP^1 \rightarrow \PP^1$
denote projection onto the first and second factor respectively.
Let $\mu_i= \pi_i \vert_{C^\prime} : C^\prime \rightarrow \PP^1$.
Then $f_i=\mu_i \circ \varphi$. By Lemma~\ref{lem:critical},
there are two possibilities:
\begin{enumerate}[(I)]
		\item either $\deg(\varphi)=1$ and $\deg(\mu_1)=\deg(\mu_2)=n$;
		\item or $\deg(\varphi)=n$ and $\deg(\mu_1)=\deg(\mu_2)=1$.
\end{enumerate}
Suppose that (I) holds. Then $\varphi$ is a birational map,
and so $C$ and $C^\prime$ have the same geometric genus $g$.
Moreover, $C^\prime$ is a curve of bidegree $(n,n)$ on $\PP^1 \times \PP^1$
and therefore has arithmetic genus $(n-1)^2$ (see \cite[Exercise III.5.6]{Hartshorne}).
Thus $g \le (n-1)^2$ giving a contradiction.

Therefore (II) holds. Thus $\mu_1$, $\mu_2$ are birational, and we have a commutative
diagram of morphisms
\[
\begin{tikzcd}
	& C \arrow[ld, "f_1" above] \arrow[d, "\varphi"] \arrow[rd, "f_2"] & \\
	\PP^1 & C^\prime \arrow[l, "\mu_1"] \arrow[r, "\mu_2" below] & \PP^1
\end{tikzcd}
\]
Write $\mu=\mu_2 \circ \mu_1^{-1}$. Then $\mu : \PP^1 \rightarrow \PP^1$
is an automorphism satisfying $f_2=\mu \circ f_1$.
Thus $f_2^*=f_1^* \circ \mu^*=f_1^* \circ \mu^{-1}$.
Let $\alpha=\mu^{-1}(0)$. Then
\[
		\divv(f_2)=f_2^*(0)-f_2^*(\infty)=f_2^*(0)-D_2,
		\qquad
		\divv(f_1-\alpha)=f_1^*(\alpha)-f_1^*(\infty)=f_2^*(0)-D_1.
\]
Hence $D_2-D_1=\divv((f_1-\alpha)/f_2)$ establishing the theorem.

\section{Proof of Theorem~\ref{thm:single}}\label{sec:singleproof}
We shall need the following famous theorem due to Faltings \cite[Theorem 1]{FaltingsDio}.
\begin{thm}[Faltings]\label{thm:Faltings}
Let $A$ be an abelian variety defined over a number field $K$.
Let $V \subset A$ be a subvariety containing no translate
of a positive dimensional abelian subvariety of $A$. Then $V(K)$
is finite.
\end{thm}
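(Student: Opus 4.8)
The plan is to follow the Vojta--Faltings method, which replaces the Diophantine approximation used in the classical proof of Mordell's conjecture by an arithmetic intersection argument carried out on a high self-power of $A$. First I would reduce to a statement about a single infinite set of rational points: suppose for contradiction that $V(K)$ is infinite. By the Mordell--Weil theorem, $A(K)$ is a finitely generated abelian group, so the canonical (N\'eron--Tate) height $\hat{h}$ endows $A(K) \otimes \R$ with the structure of a Euclidean space $\R^r$ in which $V(K)$ is a subset. A compactness (pigeonhole) argument on the unit sphere of this space — the content of Mumford's gap principle — lets me extract from an infinite subset of $V(K)$ a sequence $P_1, P_2, \dotsc$ whose heights grow rapidly, say $\hat{h}(P_{i+1}) \ge c\, \hat{h}(P_i)$ for a large constant $c$, and whose normalized directions $P_i / \sqrt{\hat{h}(P_i)}$ cluster, so that all pairwise angles are small. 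This clustering is what makes the later index estimates effective.

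Next I would pass to a power $A^m$ for a suitably large $m$ and construct, via an arithmetic Siegel lemma of Bombieri--Vaaler type, an auxiliary global section $s$ of an ample line bundle $\mathcal{L}$ on $V^m \subset A^m$ whose (logarithmic, Arakelov) height is controlled, and which is arranged to vanish to a large weighted order — a large \emph{index} — at a well-chosen point $\mathbf{P} = (P_{i_1}, \dotsc, P_{i_m})$ for a rapidly increasing choice $i_1 \ll \cdots \ll i_m$, the weights being calibrated against the heights $\hat{h}(P_{i_j})$. The heart of the argument is then to compare two estimates for the index of $s$ at $\mathbf{P}$. On one side, Faltings' key Diophantine inequality on abelian varieties — a Roth-type upper bound — forces the index at $\mathbf{P}$ to be small precisely because $\mathbf{P}$ is a rational point of large height with small pairwise angles. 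On the other side, the construction together with Faltings' Product Theorem forces the index to be large unless $\mathbf{P}$ is constrained to lie on a proper product subvariety $\prod_j Z_j \subsetneq V^m$.

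I would then run an induction on $\dim V$, the case $\dim V = 0$ being trivial. The Product Theorem asserts that the locus where the index of $s$ exceeds the chosen threshold is a finite union of product subvarieties $\prod_j Z_j$ with each $Z_j \subsetneq V$ proper and defined over a controlled extension. Combining this with the clustering of the $P_i$, one concludes that all but finitely many of the $P_i$ are pushed into a single proper subvariety $Z \subsetneq V$. Since $Z$ again contains no translate of a positive-dimensional abelian subvariety (passing if necessary to the quotient by the stabilizer of $Z$ to preserve this hypothesis), the inductive hypothesis gives that $Z(K)$ is finite, contradicting the infinitude of $\{P_i\}$. Hence $V(K)$ is finite.

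The main obstacle is the pair of index estimates, and above all the arithmetic upper bound, that is, the generalization of the Roth/Dyson lemma to abelian varieties: its proof requires delicate Arakelov-theoretic height computations on $A^m$ and constitutes the genuinely new analytic input of Faltings. The Product Theorem, though geometric rather than analytic, is also highly nontrivial and supplies the mechanism that converts a local statement about high index at one point into the global geometric constraint driving the induction. In a full write-up I would treat both of these as the deep black boxes of the argument, exactly as they function in the literature, since reproving either from scratch lies far outside the scope of the present paper, which needs only to \emph{invoke} the finiteness conclusion.
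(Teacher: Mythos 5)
The paper does not prove this statement at all: it is quoted verbatim as Faltings' theorem, with a citation to \cite[Theorem 1]{FaltingsDio}, and is used later as a black box (in the proofs of Theorems~\ref{thm:single} and~\ref{thm:primitive}). So there is no internal proof to compare against; the only fair comparison is with Faltings' own argument, and on that score your outline is a faithful roadmap of the Vojta--Faltings method: Mordell--Weil plus Mumford's gap principle to extract a rapidly growing, direction-clustered sequence of points, an auxiliary section on a high power $V^m \subset A^m$ constructed by an arithmetic Siegel lemma, the tension between Faltings' Roth-type upper bound for the index and the lower bound coming from the construction, the Product Theorem to convert high index into confinement to a product of proper subvarieties, and induction on $\dim V$. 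You are also right that the two deep inputs (the index inequality and the Product Theorem) must be treated as black boxes; reproving them is not feasible, and indeed the paper's choice to cite the whole theorem is the correct professional analogue of what you do.

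Two inaccuracies in your sketch are worth flagging, though neither destroys the outline. First, the auxiliary bundle on $A^m$ is not simply \emph{an ample line bundle on $V^m$}: the whole point of Vojta's construction is that $\mathcal{L}$ is a carefully tuned combination of pullbacks of a theta divisor and Poincar\'{e} (difference) bundles, calibrated by the weights $s_i \sim 1/\sqrt{\hat{h}(P_i)}$, whose crucial property is smallness (near-negativity) in the direction of the clustered points; sections of a genuinely ample bundle of controlled height would yield no contradiction. Second, your parenthetical about passing to the quotient by the stabilizer of $Z$ is unnecessary here: any subvariety $Z \subset V$ automatically inherits the hypothesis, since a translate of a positive-dimensional abelian subvariety contained in $Z$ would already lie in $V$. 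The stabilizer-quotient device belongs to the proof of the general Lang conjecture (where $V$ may contain such translates and one analyzes the Ueno/Kawamata locus), not to the statement as given. With those caveats, your proposal is a correct account of the standard proof, but it is a proof schema rather than a proof: all of the analytic and geometric content lives inside the two results you explicitly defer to the literature.
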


\begin{proof}[Proof of Theorem~\ref{thm:single}]
Write $C^{(n)}$ for the $d$-th symmetric power of $C$.
We recall that $C^{(n)}(K)$ parametrizes
effective degree $n$ divisors on $C$, and 
write $C^{(n)}_{\prim}(K)$ for the subset of primitive 
degree $n$ divisors. By assumption $C^{(n)}_{\prim}(K)$
is infinite. Fix $D_0 \in C^{(n)}(K)$, and let
\[
		\iota \; : \; C^{(n)} \rightarrow J, \qquad D \mapsto [D-D_0]
\]
be the corresponding Abel--Jacobi map. Write $W_n=\iota(C^{(n)})$.
We note that $\dim(W_n)=n<g=\dim(J)$ by assumption~\eqref{eqn:glb}.

We claim that $W_n(K)$ is finite. This is trivially true
if $J(K)$ is finite. Suppose instead that $J$ is simple.
Then the finiteness of $W_n(K)$ follows immediately from
Faltings' Theorem. This establishes our claim.

Now write $W_n(K)=\{Q_1,\dotsc,Q_r\}$. 
Let $D_1^\prime,\dotsc,D_r^\prime \in C^{(n)}(K)$
be effective divisors satisfying $\iota(D_i^\prime)=Q_i$.
Then
\[
	C^{(n)}(K) \; = \; 
	\bigcup_{i=1}^r \,  \lvert D_i^\prime \rvert;
\]
here, $\lvert D \rvert$ denotes the complete linear system
corresponding to $D$:
\[
	\lvert D \rvert \; = \; \{ D+\divv(f) \; : \;  f \in L(D)\}.
\]
Hence,
\[
	C^{(n)}_{\prim}(K) \; \subseteq \; 
	\bigcup_{j=1}^m \,  \lvert D_j \rvert;
\]
where $\lvert D_1 \rvert,\dotsc,\lvert D_m \rvert$ are the complete linear system 
among $\lvert D_1^\prime \rvert,\dotsc,\lvert D_r^\prime \rvert$
containing a primitive divisor. Recall that $D^\prime \in \lvert D\rvert$
if and only if $\lvert D^\prime \rvert=\lvert D \rvert$. Hence, we may
suppose that $D_1,\dotsc,D_m$ are primitive.
We now apply Theorem~\ref{thm:coolthm}. This tells us that $\ell(D_i)=1$ or $2$
for $i=1,\dotsc,m$. Moreover, Theorem~\ref{thm:singleRR} tells us
that $\ell(D)=2$ for at most one divisor $D$ among
$D_1,\dotsc,D_m$.
If $\ell(D)=1$ then $\lvert D \rvert=\{D\}$. Since $C^{(n)}_{\prim}(K)$
is infinite, we deduce, after permuting the $D_i$, that
\[
	C^{(n)}_{\prim}(K) \; \subseteq \; 
	\{D_1,\dotsc,D_{m-1}\} \, \cup \, \lvert D_m \rvert,
\]
where $\ell(D_m)=2$.  Let $\varphi \in L(D_m)$ be a non-constant function,
which we regard as a morphism $\varphi : C \rightarrow \PP^1$
satisfying $\varphi^*(\infty)=D_m$.
If $D \in \lvert D_m \rvert$, and $D \ne D_m$, 
then $D=D_m+\divv(\varphi-\alpha)$ for some $\alpha \in K$,
and so $D=\varphi^*(\alpha)$. This completes the proof.
\end{proof}

\section{Bounds for the minimal index of a group action}\label{sec:minimalindex}
Given a group $G$ acting on a finite
set $\Omega$, and an element $g \in G$,
we define the \textbf{fixed point ratio} of $g$ to be
\begin{equation}\label{eqn:fpr}
	\fpr(g,\Omega) :=  \frac{\# \Fix(g,\Omega)}{\# \Omega}
\end{equation}
where $\Fix(g,\Omega)$ denotes the set of points in $\Omega$ fixed by $g$.
We define the \textbf{index of $g$} to be
\begin{equation}\label{eqn:index}
	\ind(g,\Omega) := \# \Omega \, - \, \# \Orb(g,\Omega)
\end{equation}
where $\Orb(g,\Omega)$ is the set of orbits of $g$
acting on $\Omega$. We define the \textbf{minimal index of $G$
acting on $\Omega$} by 
\[
\ind(G,\Omega) := \min \{\ind(g,\Omega) \; : \; g \in G, \; g \ne 1 \}.
\]
The minimal index was originally introduced by
Malle \cite{Malle} in the context of counting
number fields with a given Galois group.
We note the following elementary relation between the index
and the fixed point ratio.
\begin{lem}\label{lem:indfpr}
$\displaystyle	\ind(g,\Omega) \; \ge \; \frac{\# \Omega}{2} (1-\fpr(g))$.
\end{lem}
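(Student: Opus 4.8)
The plan is to work directly with the orbit decomposition of the cyclic group $\langle g\rangle$ acting on $\Omega$. Write $N=\#\Omega$, let $f=\#\Fix(g,\Omega)$, and let the orbits of $g$ on $\Omega$ have sizes $\ell_1,\dotsc,\ell_k$, where $k=\#\Orb(g,\Omega)$, so that $\sum_{i=1}^{k}\ell_i=N$. The definition~\eqref{eqn:index} then rewrites as
\[
\ind(g,\Omega)=N-k=\sum_{i=1}^{k}(\ell_i-1),
\]
so that each orbit of size $\ell$ contributes exactly $\ell-1$ to the index. This reformulation is the whole point of the argument.

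The key observation I would use is that an orbit of size $1$ is precisely a fixed point of $g$ and contributes nothing to the index, while an orbit of size $\ell\ge 2$ satisfies the elementary inequality $\ell-1\ge \ell/2$. First I would discard the size-$1$ orbits from the sum (they vanish), noting that there are exactly $f$ of them; the remaining orbits all have size $\ge 2$ and together account for the other $N-f$ points of $\Omega$. Applying $\ell-1\ge\ell/2$ to each such orbit and summing gives
\[
\ind(g,\Omega)=\sum_{i\,:\,\ell_i\ge 2}(\ell_i-1)\;\ge\;\tfrac12\sum_{i\,:\,\ell_i\ge 2}\ell_i=\frac{N-f}{2}.
\]
Finally I would rewrite the right-hand side as $\tfrac{N}{2}\bigl(1-f/N\bigr)=\tfrac{\#\Omega}{2}\bigl(1-\fpr(g)\bigr)$, using the definition~\eqref{eqn:fpr} of the fixed point ratio, which is exactly the claimed bound.

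There is no genuine obstacle here: the entire statement is a one-line counting estimate once the index is expressed through the orbit sizes. The only point that repays a moment's care is the bookkeeping that the fixed points of $g$ are exactly its size-$1$ orbits, which is what legitimises splitting the sum according to whether $\ell_i=1$ or $\ell_i\ge 2$; once this is in place, the inequality $\ell-1\ge \ell/2$ for $\ell\ge 2$ does all the remaining work.
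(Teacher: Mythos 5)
Your proof is correct and is essentially the same argument as the paper's: both rest on the observation that the fixed points are exactly the singleton orbits while every other point lies in an orbit of size at least $2$, the paper bounding $\#\Orb(g,\Omega)$ from above where you bound $\sum_i(\ell_i-1)$ from below. The two computations are equivalent.
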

\begin{proof}
We partition $\Omega$ into $\Fix(g,\Omega)$
	and $\Omega \setminus \Fix(g,\Omega)$.
Every element of $\Fix(g,\Omega)$ is an orbit of $g$,
but every element of $\Omega \setminus \Fix(g,\Omega)$
belongs to an orbit of size at least $2$. Thus,
\[
	\begin{split}
		\# \Orb(g,\Omega)  & \le \# \Fix(g,\Omega) + 
		\frac{1}{2} \left(\#\Omega-\# \Fix(g,\Omega) \right)\\
		&= \frac{1}{2} \left( \# \Omega+\# \Fix(g,\Omega) \right)\\
		&= \frac{\# \Omega}{2} \left( 1+\# \fpr(g,\Omega) \right),
	\end{split}
\]
using \eqref{eqn:fpr}. The lemma follows from \eqref{eqn:index}.
\end{proof}
Given a group $G$ and a subgroup $H$, we write $G/H$ for the set
of left cosets of $H$ in $G$, and we give $G/H$ the natural
$G$-action
\[
	G \times G/H \rightarrow G/H, \qquad (g,g^\prime H) \mapsto g g^\prime H.
\]
It is clear that this action is transitive.
As we shall see, the minimal index of $G$ acting on $G/H$ arises 
naturally in the context
of estimating the genera of subcoverings
of a $G$-covering of the projective line. We shall need
an estimate for $\ind(A_n,A_n/H)$ and $\ind(S_n,S_n/H)$,
for certain subgroups $H$, and for this we
need the following recent theorem of Burness and Guralnick
\cite{BurnessGuralnick}.  
\begin{thm}[Burness and Guralnick]\label{thm:BG}
Let $G=A_n$ with $n \ge 5$,
or $G=S_n$ with $n=5$ or $n \ge 7$.
Let $\Omega$ be a finite set on which $G$ acts primitively
and faithfully.
Let $g \in G$ be an element of prime order $r$. Then
$\fpr(g,\Omega) \le 1/r$ unless, for some $1 \le \ell < n/2$,
the $G$-set $\Omega$ is isomorphic to the $G$-set
\[
	\Omega_\ell \; =\; \{A \subseteq \{1,\dotsc,n\} \; :\;  \lvert A \rvert=\ell\},
\]
endowed with the natural $G$-action.
\end{thm}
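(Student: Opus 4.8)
The plan is to convert the fixed point ratio into a purely group-theoretic invariant and then run the classification of primitive actions of $S_n$ and $A_n$. Since the action is faithful and primitive, Lemma~\ref{lem:primmax} identifies $\Omega$ with a coset space $G/H$ for a maximal (and, by faithfulness, core-free) subgroup $H$, namely a point stabilizer. Counting the $x \in G$ with $x^{-1} g x \in H$ and dividing by $\lvert H \rvert$ gives $\lvert \Fix(g,\Omega)\rvert = \lvert C_G(g)\rvert \cdot \lvert g^G \cap H\rvert / \lvert H \rvert$, and dividing by $\lvert \Omega\rvert = \lvert G\rvert/\lvert H\rvert$ collapses this to
\[
\fpr(g,\Omega) \; = \; \frac{\lvert g^G \cap H\rvert}{\lvert g^G\rvert},
\]
the proportion of the conjugacy class of $g$ that is contained in $H$. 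Thus the theorem reduces to showing that, for every maximal subgroup $H$ other than an intransitive subset stabilizer, at most a $1/r$ fraction of the class $g^G$ meets $H$.

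Next I would invoke the refinement of the O'Nan--Scott theorem classifying the maximal subgroups of $S_n$ and $A_n$. Apart from the intransitive subgroups $(S_\ell \times S_{n-\ell}) \cap G$ with $1 \le \ell < n/2$ -- which are precisely the stabilizers in the excluded subset actions $\Omega_\ell$ -- each maximal $H$ lies in one of finitely many families: the imprimitive type $(S_a \wr S_b) \cap G$ with $n = ab$, the product (wreath) type $(S_a \wr S_b) \cap G$ in product action with $n = a^b$, the affine type $\mathrm{AGL}_d(p) \cap G$ with $n = p^d$, the diagonal type, and the almost simple type $T \le H \le \Aut(T)$ acting primitively of degree $n$. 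Because $g$ has prime order $r$, its cycle type on $\{1,\dotsc,n\}$ is $(r^k, 1^{\,n-rk})$ for some $k$, so in each family the task becomes a count of how many elements of this fixed cycle type can be conjugated into $H$.

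I would then bound $\lvert g^G \cap H\rvert/\lvert g^G\rvert$ family by family. In the intransitive case the ratio genuinely exceeds $1/r$ for small $\ell$ -- for instance a transposition ($r=2$) fixes a large proportion of $\ell$-subsets -- which is exactly why these actions are the stated exception. For every remaining family the point stabilizer is small relative to $G$: in the almost simple, diagonal, and affine cases the index $\lvert G:H\rvert$ grows super-polynomially in $n$ while $\lvert g^G \cap H\rvert$ is dominated by the class sizes inside $H$, forcing the ratio well below $1/r$; the imprimitive and product-action cases demand a genuinely combinatorial estimate of how an $(r^k,1^{\,n-rk})$-element can distribute across the blocks, respectively the coordinate factors.

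I expect the main obstacle to be precisely these imprimitive and product-action wreath cases, where $\fpr(g,\Omega)$ is largest among the non-excluded families and crude bounds are not sharp enough, so one needs careful combinatorial counting -- with the small primes $r=2,3$ and small $k$ being the delicate boundary. The restriction to $G=A_n$ with $n \ge 5$ and $G=S_n$ with $n=5$ or $n \ge 7$ reflects exactly the low-degree anomalies (the exceptional maximal subgroups and outer automorphisms of $A_6$, $S_6$, and the sporadic primitive groups of small degree), where the uniform bound can fail and the finitely many offending configurations must be checked directly, typically by explicit or computer-assisted verification.
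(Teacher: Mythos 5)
This statement is not proved in the paper at all: it is imported verbatim as a special case of \cite[Corollary 3]{BurnessGuralnick}, and the remark following it only explains why $A_n$ ($n\ge 5$) and $S_n$ ($n=5$ or $n\ge 7$) fall within the almost simple setting of that result. So there is no internal proof to compare against; what you have written is a plan for reproving Burness--Guralnick from scratch.

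As a plan, your opening reduction is correct and standard: identifying $\Omega$ with $G/H$ for a core-free maximal $H$ and deriving $\fpr(g,\Omega)=\lvert g^G\cap H\rvert/\lvert g^G\rvert$ is a sound computation, and the appeal to the O'Nan--Scott-type classification of maximal subgroups of $S_n$ and $A_n$ (intransitive, imprimitive, affine, product-action, diagonal, almost simple) is the right organizing principle, with the intransitive family correctly singled out as the source of the exceptions $\Omega_\ell$. But from that point on the proposal is a statement of intent rather than a proof: the assertion that in the almost simple, diagonal, and affine cases the ratio is ``forced well below $1/r$'', and that the imprimitive and product-action cases ``demand a genuinely combinatorial estimate'', is precisely the content of the theorem, and none of these estimates is carried out. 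These bounds are the entire substance of the Burness--Guralnick result (and of its antecedents); they occupy a substantial amount of technical work, including delicate counting for small $r$ and for elements with few $r$-cycles, and cannot be dismissed as routine. You have also not verified that the excluded degrees ($S_6$, and $n\le 4$) are genuinely necessary rather than artifacts, beyond gesturing at ``low-degree anomalies''. In short, the reduction is right, but the proof has a genuine gap where all of the quantitative work should be; for the purposes of this paper the correct move is simply to cite \cite{BurnessGuralnick}, as the authors do.
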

\noindent \textbf{Remark.}
Theorem~\ref{thm:BG} is in fact
a special case of a recent result of Burness and Guralnick
\cite[Corollary 3]{BurnessGuralnick} which applies more generally
to primitive faithful actions of almost simple groups $G$. 
A group $G$ is said to be \textbf{almost simple}
if it contains a simple non-abelian subgroup $H$ such that
$H \subseteq G \subseteq \Aut(H)$. For $n \ge 5$
the group $A_n$ is simple and hence almost simple.
Also, for $n=5$ or $n \ge 7$, we have that $\Aut(A_n)=S_n$,
therefore $S_n$ is almost simple.

The more general
result of Burness and Guralnick 
includes a substantial list of exceptions where $G$ is
a classical group. We avoid having to consider these by specialising
our group to $G=A_n$ or $S_n$ as these are the only cases we need.

\begin{lem}\label{lem:faithful}
Let $n \ge 5$.
\begin{enumerate}[(i)]
\item Let $H \ne A_n$, $S_n$ be a subgroup of $S_n$.
Then $S_n$ acts faithfully on $S_n/H$.
\item Let $H$ be a proper subgroup of $A_n$.
Then $A_n$ acts faithfully on $A_n/H$.
\end{enumerate}
\end{lem}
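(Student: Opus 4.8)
The plan is to reduce faithfulness to a statement about normal subgroups and then invoke the simplicity of $A_n$. For any group $G$ and subgroup $H$, an element $g \in G$ acts trivially on $G/H$ precisely when $gxH = xH$ for every $x \in G$, that is, when $g \in xHx^{-1}$ for all $x$. Hence the kernel of the permutation representation $G \to \Sym(G/H)$ is the \emph{normal core}
\[
	\bigcap_{x \in G} x H x^{-1},
\]
which is the largest normal subgroup of $G$ contained in $H$. Thus in both parts it suffices to show that the only normal subgroup of $G$ contained in the relevant $H$ is trivial.

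For part (ii) I would use that $A_n$ is simple for $n \ge 5$. The normal core of $H$ is a normal subgroup of $A_n$ contained in the proper subgroup $H$; by simplicity it is either $\{1\}$ or $A_n$, and the latter is impossible since $H$ is proper. Hence the core is trivial and the action on $A_n/H$ is faithful.

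For part (i) I would first recall that for $n \ge 5$ the only normal subgroups of $S_n$ are $\{1\}$, $A_n$ and $S_n$. This follows from the simplicity of $A_n$ together with the triviality of the centre of $S_n$: a normal subgroup $N$ of $S_n$ intersects $A_n$ in a normal subgroup of $A_n$, so $N \cap A_n \in \{\{1\}, A_n\}$, and the case $N \cap A_n = \{1\}$ forces $\#N \le 2$, hence $N = \{1\}$ as $S_n$ has no central involution. The normal core of $H$ is therefore one of $\{1\}$, $A_n$, $S_n$. It cannot be $S_n$, since the core is contained in the proper subgroup $H$. It cannot be $A_n$ either: if $A_n \subseteq H$, then $[S_n : A_n] = 2$ would force $H = A_n$ or $H = S_n$, contradicting the hypothesis $H \ne A_n, S_n$. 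Hence the core is $\{1\}$ and the action on $S_n/H$ is faithful.

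The argument is elementary, and the only real input is the classification of normal subgroups of $A_n$ and $S_n$ for $n \ge 5$. There is no genuine obstacle here; the one point to handle carefully is the $S_n$ case, where I must rule out a hypothetical normal core equal to $A_n$, and this is exactly where the hypothesis $H \ne A_n, S_n$ (rather than merely $H$ proper) is used. Everything else is a direct translation of faithfulness into the vanishing of the normal core.
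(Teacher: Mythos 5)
Your proof is correct and takes essentially the same approach as the paper: both arguments reduce faithfulness to exhibiting a normal subgroup of $G$ contained in $H$ (you use the normal core of $H$, the paper uses the normal closure of a single element of the kernel) and then invoke the classification of normal subgroups of $S_n$ and $A_n$ for $n \ge 5$. Your write-up is slightly more detailed in that it also justifies that classification, which the paper simply asserts.
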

\begin{proof}
Let $H \ne A_n$, $S_n$ be a subgroup of $S_n$.
Let $g \in S_n$ and suppose $g \sigma H=\sigma H$ for all $\sigma \in S_n$.
Then $\sigma^{-1} g \sigma \in H$ for all $\sigma \in S_n$.
Write
\[
H^\prime \; = \; \langle \sigma^{-1} g \sigma \; : \; \sigma \in S_n \rangle.
\]
Then $H^\prime$ is a normal subgroup of $S_n$
and so is equal to $1$, $A_n$ or $S_n$.
But $H^\prime \subseteq H$, so $H^\prime=1$
and so $g=1$,
establishing (i). The proof of (ii) is similar.
\end{proof}

\begin{lem}\label{lem:fpr}
Let $n \ge 5$.
	\begin{enumerate}[(I)]
		\item 
		Let $H$ be a maximal transitive subgroup of $A_n$.
		Let $g \in A_n$, $g \ne 1$. Then $\fpr(g,A_n/H) \le 1/2$.
		\item 
		Let $H \ne A_n$ be a maximal transitive subgroup of $S_n$.
		Let $g \in S_n$, $g \ne 1$. Then $\fpr(g,S_n/H) \le 2/3$.
		\item 
		Let $H$ be a maximal transitive subgroup of $A_n$.
		Let $g \in S_n$, $g \ne 1$. Then $\fpr(g,S_n/H) \le 3/4$.
	\end{enumerate}
\end{lem}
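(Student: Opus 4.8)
The plan is to route all three parts through Theorem~\ref{thm:BG}, after reducing to elements of prime order and checking that theorem's hypotheses. First I would record the common setup. In each part $H$ is maximal in the ambient group, and the stabiliser of the trivial coset in the coset action is $H$ itself, so by Lemma~\ref{lem:primmax} the action is primitive. Faithfulness comes from Lemma~\ref{lem:faithful}: in (I) because $H$ is a proper subgroup of $A_n$, and in (II) because $H \ne A_n, S_n$. The decisive point is that the \emph{transitivity} of $H$ excludes the exceptional $G$-sets $\Omega_\ell$ of Theorem~\ref{thm:BG}: the stabiliser of an $\ell$-subset is $(S_\ell \times S_{n-\ell}) \cap G$, which is intransitive on $\{1,\dots,n\}$, whereas isomorphic transitive $G$-sets have conjugate—hence equally transitive—point stabilisers, so a transitive $H$ can never be conjugate to such a stabiliser. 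Finally, for an arbitrary $g \ne 1$ I would pass to prime order: choosing a prime $r \mid \ord(g)$ and setting $h = g^{\ord(g)/r}$, one has $\Fix(g) \subseteq \Fix(h)$ and hence $\fpr(g) \le \fpr(h)$, with $h$ of order $r$.

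For part (I), Theorem~\ref{thm:BG} applies to $G = A_n$ for every $n \ge 5$, so $\fpr(g, A_n/H) \le \fpr(h, A_n/H) \le 1/r \le 1/2$; this already gives the sharper bound $1/2$, and in particular the claim. For part (II) with $n = 5$ or $n \ge 7$ the identical argument for $G = S_n$ yields $\fpr(g, S_n/H) \le 1/r \le 1/2 \le 2/3$.

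The one case Theorem~\ref{thm:BG} omits is $n = 6$, and this is where the real work—and the genuine size of the bound—lies. I would treat $S_6$ separately, either by invoking the general form of the Burness--Guralnick theorem (\cite[Corollary 3]{BurnessGuralnick}, which covers the almost simple group $S_6$) or by an explicit check over the finitely many maximal transitive subgroups of $S_6$. For the imprimitive wreath subgroups one finds $\fpr \le 2/3$ by direct computation, while the extremal subgroup is $H \cong \operatorname{PGL}_2(5)$ in its exotic degree-$6$ embedding: via the outer automorphism of $S_6$ this action is carried to the standard action on six points, under which a transposition fixes four points, so some element (a triple transposition) attains $\fpr = 4/6 = 2/3$. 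Thus the bound $2/3$ is sharp and is exactly what can be proved.

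For part (III), I would exploit that $H \le A_n$ with $H \ne A_n$ forces the proper chain $H < A_n < S_n$, so $A_n$ is an intermediate subgroup and the $S_n$-action on $\Omega = S_n/H$ is imprimitive, with block system the fibres of $\Omega \to S_n/A_n$. There are two blocks $B_1, B_2$ of size $[A_n:H]$, and as $A_n$-sets $B_1 \cong A_n/H$, $B_2 \cong A_n/H'$ with $H' = \tau H \tau^{-1}$ for an odd $\tau$; one checks $H'$ is again maximal and transitive in $A_n$. An even $g \ne 1$ preserves both blocks, whence $\fpr(g, \Omega) = \tfrac12\bigl(\fpr(g, A_n/H) + \fpr(g, A_n/H')\bigr) \le \tfrac12$ by part (I), while an odd $g$ interchanges $B_1$ and $B_2$ and so fixes nothing, giving $\fpr(g,\Omega) = 0$. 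Either way $\fpr(g,\Omega) \le \tfrac12 \le \tfrac34$. The main obstacle is plainly the exceptional group $S_6$ in part (II): Theorem~\ref{thm:BG} deliberately excludes it, and it is precisely there that the fixed point ratio exceeds $1/2$, reaching the claimed $2/3$; everything else reduces cleanly, with parts (I) and (III) even yielding the stronger bound $1/2$.
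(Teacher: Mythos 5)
Your proof is correct. Parts (I) and (II) follow essentially the same route as the paper: verify that the coset action is primitive (Lemma~\ref{lem:primmax}) and faithful (Lemma~\ref{lem:faithful}), rule out the exceptional $G$-sets $\Omega_\ell$ of Theorem~\ref{thm:BG} because their point stabilisers are intransitive while $H$ is transitive, reduce a general $g\ne 1$ to a power of prime order, and treat the excluded case $n=6$ of part (II) by a direct check over the three maximal transitive subgroups $\ne A_6$ of $S_6$ (the paper does this by machine, obtaining the ratios $7/15$, $2/5$ and $2/3$, the extremal value $2/3$ coming from the subgroup $S_5\cong\operatorname{PGL}_2(5)$ exactly as you describe via the outer automorphism). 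Part (III) is where you genuinely diverge, and your argument is both valid and sharper. The paper notes that an element fixing some coset must lie in $A_n$, bounds its fixed points among the even cosets by $\tfrac12[A_n:H]$ using (I), and then crudely allows \emph{all} $[A_n:H]$ odd cosets to be fixed, giving $\tfrac34[S_n:H]$. You instead split $S_n/H$ into its two $A_n$-blocks, identify the second block as $A_n/\tau H\tau^{-1}$ with $\tau$ odd (and $\tau H\tau^{-1}$ again maximal transitive in $A_n$), apply (I) to each block for even $g$, and observe that odd $g$ swaps the blocks and fixes nothing; this yields $\fpr(g,S_n/H)\le 1/2$, strictly stronger than the stated $3/4$. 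Propagated through Lemma~\ref{lem:indfpr}, your version would improve the constant in Lemma~\ref{lem:ind}(III) from $[S_n:H]/8$ to $[S_n:H]/4$, though the paper's weaker bound already suffices for Lemma~\ref{lem:genus}.
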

\begin{proof}
We consider (I) first, and thus the action of $A_n$ on $A_n/H$. 
Note that $H$ is the stabilizer of the trivial coset $H \in A_n/H$. 
By assumption $H$ is maximal.
Therefore, by Lemma~\ref{lem:primmax},
 the action of $A_n$ on $A_n/H$ is primitive.
Moreover, it is faithful by Lemma~\ref{lem:faithful}. 
We apply Theorem~\ref{thm:BG}. We claim that $A_n/H$ is not
isomorphic to $\Omega_\ell$ as an $G$-set (with $G=A_n$). Observe that
the stabilizer of $\{1,\dotsc,\ell\} \in \Omega_\ell$
is $A_n \cap (S_\ell \times S_{n-\ell})$. If $A_n/H$ is isomorphic
to $\Omega_\ell$ as an $G$-set, then 
$H$ must be conjugate inside $A_n$ to $A_n \cap (S_\ell \times S_{n-\ell})$.
This contradicts the transitivity of $H$, and establishes our claim.

Now let $g \in A_n$, $g \ne 1$. Then, for some positive integer $m$,
$g^m$ has prime order $r$ (say), and so $\fpr(g^m,A_n/H) \le 1/r \le 1/2$
by Theorem~\ref{thm:BG}. But clearly $\Fix(g,A_n/H) \subseteq \Fix(g^m,A_n/H)$.
Thus $\fpr(g,A_n/H) \le \fpr(g^m,A_n/H) \le 1/2$ completing the proof of (I).

For $n \ne 6$, the proof of (II) is almost identical to the proof of (I),
and in fact gives $\fpr(g,S_n/H) \le 1/2$.
For $n=6$ we may not apply Theorem~\ref{thm:BG}. However we
settled this case by brute force enumeration
using the computer algebra package \texttt{Magma} \cite{magma}.
The group $A_6$ has three maximal transitive subgroups $H \ne A_n$,
with orders $48$, $72$ and $120$. These
yield bounds for $\fpr(g,S_n/H)$
which are respectively $7/15$, $2/5$ and $2/3$. This completes
the proof of (II).

Next we turn our attention to (III). Note that in
this case $H$ is maximal in $A_n$ and not in $S_n$.
Therefore, the action of $S_n$ on $S_n/H$ is not primitive
and so we may not apply Theorem~\ref{thm:BG}.
Let $g \in S_n$, $g \ne 1$.
and suppose that $\fpr(g,S_n/H) \ne 0$.
Then $g (\sigma H)=\sigma H$ for some $\sigma \in S_n$.
Thus $\sigma^{-1} g \sigma \in H \subset A_n$. As $A_n$ is normal in $S_n$ we have 
that $g \in A_n$. Now by (I),
\[
		\# \Fix(g,A_n/H) \; \le \; \frac{1}{2} \cdot \# [A_n:H].
\]
Clearly,
\[
	\Fix(g,S_n/H) \; \subseteq \; \Fix(g,A_n/H) \cup ((S_n/H) \setminus (A_n/H)).
\]
Hence
\[
	\# \Fix(g,S_n/H) \; \le \; \frac{1}{2} \cdot \# [A_n:H] + [S_n:H]-[A_n:H] \; = \;
	\frac{3}{4} [S_n:H],
\]
proving (III).
\end{proof}

\begin{lem}\label{lem:ind}
Let $n \ge 5$.
	\begin{enumerate}[(I)]
		\item 
		Let $H$ be a maximal transitive subgroup of $A_n$.
			Then 
			\[
				\ind(A_n,A_n/H)\ge [A_n:H]/4.
			\]
		\item 
		Let $H \ne A_n$ be a maximal transitive subgroup of $S_n$.
			Then 
			\[
				\ind(S_n,S_n/H) \ge [S_n:H]/6.
			\]
		\item 
		Let $H$ be a maximal transitive subgroup of $A_n$.
			Then 
			\[
				\ind(S_n,S_n/H) \ge [S_n:H]/8.
			\]
	\end{enumerate}
\end{lem}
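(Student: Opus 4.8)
The plan is to combine Lemma~\ref{lem:indfpr}, which lower-bounds the index of a nontrivial element in terms of its fixed point ratio, with the fixed point ratio bounds just established in Lemma~\ref{lem:fpr}. Each of the three parts of the present lemma corresponds directly to a part of Lemma~\ref{lem:fpr}: part (I) uses the bound $\fpr(g,A_n/H) \le 1/2$, part (II) uses $\fpr(g,S_n/H) \le 2/3$, and part (III) uses $\fpr(g,S_n/H) \le 3/4$. The strategy is uniform across all three cases, so I would set up one computation and then instantiate it three times.

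Concretely, fix a nontrivial $g$ in the relevant group $G$ (either $A_n$ or $S_n$) and let $\Omega$ be the corresponding coset space ($A_n/H$ or $S_n/H$), so that $\#\Omega = [G:H]$. Lemma~\ref{lem:indfpr} gives
\[
	\ind(g,\Omega) \; \ge \; \frac{\#\Omega}{2}\bigl(1 - \fpr(g,\Omega)\bigr).
\]
For part (I), substituting $\fpr(g,A_n/H) \le 1/2$ yields $\ind(g,A_n/H) \ge \frac{1}{2}[A_n:H]\cdot\frac{1}{2} = [A_n:H]/4$. For part (II), substituting $\fpr(g,S_n/H) \le 2/3$ yields $\ind(g,S_n/H) \ge \frac{1}{2}[S_n:H]\cdot\frac{1}{3} = [S_n:H]/6$. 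For part (III), substituting $\fpr(g,S_n/H) \le 3/4$ yields $\ind(g,S_n/H) \ge \frac{1}{2}[S_n:H]\cdot\frac{1}{4} = [S_n:H]/8$. Since these inequalities hold for every nontrivial $g \in G$, they hold in particular for the element achieving the minimum, and so they pass to $\ind(G,\Omega) = \min_{g \ne 1}\ind(g,\Omega)$, which is exactly the claimed bound in each case.

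There is essentially no obstacle here: the lemma is a formal consequence of the two preceding lemmas, and the only care required is bookkeeping to match each fixed point ratio bound to the correct index denominator (noting the factor of $1/2$ from Lemma~\ref{lem:indfpr} doubles each denominator, so $1/2 \mapsto 1/4$, $2/3 \mapsto 1/6$, $3/4 \mapsto 1/8$). One small point worth flagging in the write-up is that Lemma~\ref{lem:fpr} assumes the relevant maximality and transitivity hypotheses on $H$, which are carried over verbatim into the hypotheses of the present lemma, so the application is immediate. The only genuine mathematical content was already expended in proving Lemma~\ref{lem:fpr} via the Burness--Guralnick theorem; this lemma is purely the translation from fixed point ratios to indices.
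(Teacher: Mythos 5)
Your proposal is correct and is exactly the paper's argument: the paper's proof of this lemma is the one-line statement that it follows from Lemma~\ref{lem:indfpr} combined with Lemma~\ref{lem:fpr}, and your instantiation of the three cases (with the factor of $1/2$ turning the bounds $1/2$, $2/3$, $3/4$ into denominators $4$, $6$, $8$) is the intended computation.
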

\begin{proof}
	The lemma follows from Lemma~\ref{lem:fpr}
	and Lemma~\ref{lem:indfpr}.
\end{proof}

\section{Galois Theory and Specializations}\label{sec:monodromy}
Let $K$ be a number field. 
Let $\varphi : C \rightarrow \PP^1$ be a morphism of
curves defined over $K$.
Note that $K(C) \cap \overline{K}=K$,
as $C$ is geometrically connected.
We write $\Ram(\varphi) \subset C$ for the set
of ramification points of $C$. The 
\textbf{set of branch values} for $\varphi$ is 
defined by
\[
	\BV(\varphi) \; = \; \{ \varphi(P) \; : \; P \in \Ram(\varphi)\}
	\; \subset \; \PP^1.
\]

Let $\KK$ be the Galois closure of the
function field extension $K(C)/K(\PP^1)$ induced by $\varphi$.
Write $n=\deg(\varphi)$. Then we may naturally identify $G^\prime:=\Gal(\KK/K(\PP^1))$
with a transitive subgroup of $S_n$. In what follows, when 
we speak of subgroups of $G^\prime$ being transitive or primitive,
it is with respect to the action on $\{1,2,\dotsc,n\}$.
\begin{lem}\label{lem:residue}
Let $\alpha \in \PP^1(K)$, and consider $\alpha$ as a place of $\PP^1$.
Let $\cP$
be a place of $\KK$ above $\alpha$. Then $K(\cP)/K$
is a Galois extension with Galois group isomorphic to the
decomposition group
\[
G^\prime_\cP = \{ \sigma \in G^\prime \; : \; \sigma(\cP)=\cP\}.
\]
\end{lem}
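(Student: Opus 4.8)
The plan is to read this off from the standard theory of decomposition and inertia groups for the Galois extension of function fields $\KK/K(\PP^1)$, specialised to the $K$-rational place $\alpha$. Since $\alpha \in \PP^1(K)$, its residue field as a place of $K(\PP^1)$ is exactly $K$. The group $G^\prime = \Gal(\KK/K(\PP^1))$ acts transitively on the finitely many places of $\KK$ lying above $\alpha$, and for our fixed $\cP$ the stabiliser of this action is the decomposition group $G^\prime_\cP$. Every $\sigma \in G^\prime_\cP$ preserves the valuation ring and its maximal ideal at $\cP$, hence induces an automorphism of the residue field $K(\cP)$ fixing $K$; this gives a reduction homomorphism $G^\prime_\cP \to \Aut(K(\cP)/K)$.

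First I would invoke the classical statement that $K(\cP)/K$ is normal and that this reduction map is surjective, obtaining it from the standard ramification theory of a Galois extension of Dedekind domains applied to the integral closure of the local ring $\OO_\alpha \subset K(\PP^1)$ in $\KK$, together with transitivity of $G^\prime$ on the places above $\alpha$. Since $\Char K = 0$ the residue extension is automatically separable, so $K(\cP)/K$ is Galois and $\Aut(K(\cP)/K) = \Gal(K(\cP)/K)$. Writing $I_\cP$ for the kernel of the reduction map, the inertia group, we arrive at the fundamental exact sequence
\[
1 \longrightarrow I_\cP \longrightarrow G^\prime_\cP \longrightarrow \Gal(K(\cP)/K) \longrightarrow 1,
\]
which already yields that $K(\cP)/K$ is Galois and that $G^\prime_\cP$ surjects onto its Galois group.

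The heart of the matter, and the step I expect to be the main obstacle, is to show that $I_\cP$ is trivial, for only then does this surjection become the asserted isomorphism $G^\prime_\cP \cong \Gal(K(\cP)/K)$. Triviality of $I_\cP$ is equivalent to $\cP$ being unramified over $\alpha$, i.e.\ to $\alpha$ being unramified in $\KK/K(\PP^1)$. Here I would use that the ramification of the Galois closure $\KK/K(\PP^1)$ occurs only above $\BV(\varphi)$: passing to the Galois closure adjoins the remaining sheets of the cover but introduces no new ramification over points where $\varphi$ is already unramified. Consequently, in the relevant range $\alpha \notin \BV(\varphi)$ the extension is unramified at $\cP$, forcing $e(\cP/\alpha)=1$ and $I_\cP = 1$. (That this restriction on $\alpha$ is genuinely needed is already visible for $\varphi(x)=x^2$ at $\alpha = 0$, where the unique place above $\alpha$ is ramified, $K(\cP)=K$, yet $G^\prime_\cP = G^\prime \ne 1$.) With $I_\cP = 1$ the exact sequence collapses to the required isomorphism, completing the proof.
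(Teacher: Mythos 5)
Your argument follows the same route the paper takes: its entire proof is a citation of Stichtenoth, Theorem III.8.2, which is exactly the exact sequence $1 \to I_\cP \to G^\prime_\cP \to \Gal(K(\cP)/K) \to 1$ you assemble (normality of the residue extension, surjectivity of the reduction map, kernel equal to the inertia group, with separability free in characteristic zero). Where you go beyond the paper is in insisting that $I_\cP$ be shown trivial, and you are right to insist: the cited theorem only yields $G^\prime_\cP/I_\cP \cong \Gal(K(\cP)/K)$, the decomposition group has order $e(\cP/\alpha)\cdot[K(\cP):K]$, and your example $x \mapsto x^2$ at $\alpha=0$ (where $K(\cP)=K$ but $G^\prime_\cP$ has order $2$) shows the lemma is false as literally stated, with no hypothesis on $\alpha$. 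So strictly speaking you have proved a corrected statement rather than the stated one, but the defect lies in the statement, not in your proof: in its sole application (the lemma showing $G^\prime=A_n$ or $S_n$) the result is invoked only for $\alpha \in \PP^1(K) \setminus \BV(\varphi)$, and since the Galois closure introduces no branch values beyond $\BV(\varphi)$ (the paper itself records $\BV(\mu)=\BV(\varphi)$), one has $e(\cP/\alpha)=1$ and $I_\cP=1$ there, exactly as you argue. In short: same underlying argument as the paper's reference, together with a correct and necessary repair of the hypotheses that the paper's one-line citation glosses over.
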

\begin{proof}
For this, see \cite[Theorem III.8.2]{Stichtenoth}.
However, we will sketch some of the ideas in the proof of
Lemma~\ref{lem:obvious}.
\end{proof}

Let $L=\KK \cap \overline{K}$, which is 
a finite Galois extension of $K$. 
Let $G=\Gal(\KK/L(\PP^1))$.
Then we obtain an exact sequence of Galois groups
\begin{equation}\label{eqn:exact}
	1 \rightarrow \underbrace{\Gal(\KK/L(\PP^1))}_{G}
	\rightarrow \underbrace{\Gal(\KK/K(\PP^1))}_{G^\prime} \rightarrow
	\underbrace{\Gal(L(\PP^1)/K(\PP^1))}_{\cong \Gal(L/K)} \rightarrow 1.
\end{equation}
We note that
$\KK/L(\PP^1)$ is  regular in the sense
that $\KK \cap \overline{L}=L$. Therefore $\KK=L(\tilde{C})$
where $\tilde{C}$ is a (geometrically connected) curve defined over $L$.
The inclusions $L(\PP^1) \subseteq L(C) \subseteq L(\tilde{C})$
correspond to morphisms
\[
	\tilde{C} \rightarrow C \xrightarrow{\varphi}{\PP^1}.
\]
and we write $\mu : \tilde{C} \rightarrow \PP^1$ for the composition
which is defined over $L$. We may naturally identify
$G$ with automorphisms of the cover $\mu$.

Now let $H$ be a subgroup of $G$. Write $\KK^H$ for the subfield
of $\KK$ fixed by $H$.
The function field extension $\KK^H/L(\PP^1)$
corresponds to a morphism of curves
$\pi_H : D_H \rightarrow \PP^1$ defined over $L$, where $L(D_H)=\KK^H$.
Note that we have the following commutative diagram of 
morphisms
\begin{equation}\label{eqn:tikz}
\begin{tikzcd}
	& \tilde{C} \arrow[ld] \arrow[rd,"\eta_H"] \arrow[dd,"\mu"] & \\
	C \arrow[rd, "\varphi"] &  & D_H \arrow[ld, "\pi_H"] \\
	& \PP^1 &
\end{tikzcd}
\end{equation}
We note that $\BV(\mu)=\BV(\varphi)$ (see for example \cite[Corollary III.8.4]{Stichtenoth}).
It follows that $\BV(\pi_H) \subseteq \BV(\varphi)$.

We shall use \eqref{eqn:tikz} to study fibres of the map $\varphi$
with certain Galois group. 
The curve $D_H$ is important to us because of the following
standard result.
\begin{lem}\label{lem:obvious}
Let $\cP \in \tilde{C}$ be an algebraic point with $\mu(\cP) \in \PP^1(L) \setminus \BV(\varphi)$.
Write
\[
	G_{\cP} \; := \; \{\sigma \in G \; : \; \sigma(\cP)=\cP \}
\]
for the decomposition group of $\cP$. Let $H$ be a subgroup of $G$,
and suppose $G_{\cP} \subseteq H$. Then $\eta_H(\cP) \in D_H(L)$.
\end{lem}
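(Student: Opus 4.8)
The plan is to translate everything into the language of places of the function field $\KK=L(\tilde{C})$ and to read off the residue degree of $\eta_H(\cP)$ directly from the decomposition group $G_\cP$, exactly in the spirit of Lemma~\ref{lem:residue}. I regard $\cP$ as a place of $\KK$; since $\mu(\cP)=\alpha\in\PP^1(L)$, this place lies over the degree-one place $\alpha$ of $L(\PP^1)$, whose residue field is $L$. First I would record that $\KK/L(\PP^1)$ is unramified at $\cP$: this holds because $\alpha\notin\BV(\varphi)=\BV(\mu)$, the equality of branch loci having been noted just after \eqref{eqn:tikz}.

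Next I would invoke the decomposition theory for Galois extensions of function fields (\cite[Theorem III.8.2]{Stichtenoth}, the same result underlying Lemma~\ref{lem:residue}). Applied to the Galois extension $\KK/L(\PP^1)$, with group $G$, it shows that the residue field $L(\cP)$ is Galois over $L$ with Galois group $G_\cP/I_\cP$, where $I_\cP$ is the inertia group; since $\cP$ is unramified we have $I_\cP=1$, and hence $[L(\cP):L]=\lvert G_\cP\rvert$. The point $\eta_H(\cP)\in D_H$ corresponds to the restriction of $\cP$ to $\KK^H=L(D_H)$ (the place of $\KK^H$ below $\cP$), because $\eta_H$ is dual to the inclusion $\KK^H\hookrightarrow\KK$; its residue field $L(\eta_H(\cP))$ therefore sits in a tower $L\subseteq L(\eta_H(\cP))\subseteq L(\cP)$.

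Now I would apply the same decomposition theory to the Galois extension $\KK/\KK^H$, which has group $H$. The place $\cP$ lies over $\eta_H(\cP)$, is still unramified, and its decomposition group inside $H$ is $\{h\in H:h(\cP)=\cP\}=H\cap G_\cP$. Hence $L(\cP)/L(\eta_H(\cP))$ is Galois with $[L(\cP):L(\eta_H(\cP))]=\lvert H\cap G_\cP\rvert$. Combining the two paragraphs,
\[
	[L(\eta_H(\cP)):L] \;=\; \frac{[L(\cP):L]}{[L(\cP):L(\eta_H(\cP))]} \;=\; \frac{\lvert G_\cP\rvert}{\lvert H\cap G_\cP\rvert}.
\]
By hypothesis $G_\cP\subseteq H$, so $H\cap G_\cP=G_\cP$ and the ratio is $1$. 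Thus $L(\eta_H(\cP))=L$, that is, $\eta_H(\cP)\in D_H(L)$.

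I do not expect a serious obstacle: the argument is pure bookkeeping with decomposition groups, and the only genuinely load-bearing point is the unramifiedness of $\alpha$ (guaranteed by $\alpha\notin\BV(\varphi)$), which is what lets me discard the inertia groups and identify residue degrees with orders of decomposition groups. The identification of $\eta_H(\cP)$ with the restriction of $\cP$ to $\KK^H$, and of the corresponding residue fields, is immediate from the Galois correspondence and the definition of $\eta_H$.
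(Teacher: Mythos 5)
Your proof is correct and rests on the same core mechanism as the paper's: the point $\cP$ is unramified because $\mu(\cP)\notin\BV(\varphi)=\BV(\mu)$, and the decomposition group $G_\cP$ is identified with $\Gal(L(\cP)/L)$ via \cite[Theorem III.8.2]{Stichtenoth}. The only (cosmetic) difference is the last step: you apply decomposition theory a second time to $\KK/\KK^H$ and conclude $[L(\eta_H(\cP)):L]=\lvert G_\cP\rvert/\lvert H\cap G_\cP\rvert=1$ by multiplicativity of residue degrees, whereas the paper checks directly that every $f\in\OO_{\cP}\cap\KK^H$ has value $f(\cP)$ fixed by $G_\cP\subseteq H$ and hence lying in $L$ --- equivalent bookkeeping for the same fact.
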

\begin{proof}
The lemma is implicit in most proofs of Hilbert's Irreducibility
Theorem, e.g. \cite[Proposition 3.3.1]{SerreTopics}, but 
we give a proof as it helps make ideas precise.
Note that $\cP$ is unramified in $\mu$.
Since $\KK/L(\PP^1)$ is a Galois extension, the
extension $L(\cP)/L$ is Galois, and its Galois group
can be identified with $G_{\cP}$ in a natural way;
see for example \cite[Theorem 3.8.2]{Stichtenoth}.
We shall in fact need some of the details of this
identification, which we now sketch.
Write
\begin{equation}\label{eqn:vr}
	\OO_{\cP}=\{f \in \KK \; : \; \ord_{\cP}(f) \ge 0\}, \qquad
	\fm_{\cP}=\{f \in \KK \; : \; \ord_{\cP}(f) > 0\},
\end{equation}
for the valuation ring of $\cP$ and its maximal ideal. Then $L(\cP)$
may be identified with $\OO_{\cP}/\fm_{\cP}$ via the well-defined
map
\[
	\OO_{\cP}/\fm_{\cP} \rightarrow L(\cP), \qquad f+\fm_{\cP} \mapsto f(\cP).
\]
Let $\sigma \in G_{\cP}$, and let $f \in \KK$.
Then
\[
	\ord_{\cP}(\sigma(f)) =\ord_{\sigma^{-1}(\cP)}(f)=\ord_{\cP}(f).
\]
It follows that $\sigma(\OO_\cP)=\OO_\cP$ and $\sigma(\fm_\cP)=\fm_\cP$.
Hence $\sigma \in G_{\cP}$ induces a well-defined automorphism
of $\OO_{\cP}/\fm_{\cP}=L(\cP)$ given by $\sigma(f+\fm_\cP)=\sigma(f)+\fm_\cP$.
Since $L \subseteq L(\PP^1)$ which is fixed by $G$,
the automorphism on $L(\cP)$ induced by $\sigma$ fixes $L$.
We have now constructed a homomorphism $G_{\cP} \rightarrow \Aut(L(\cP)/L)$.
It turns out \cite[Theorem III.8.2]{Stichtenoth}, since $\KK/L(\PP^1)$
is Galois, that
$L(\cP)/L$ is Galois, and that the homomorphism constructed is in fact an isomorphism
$G_{\cP} \xrightarrow{\sim} \Gal(L(\cP)/L)$.

Now write $R=\eta_H(\cP)$. We would like to show that $\eta_H(R) \in D_H(L)$.
It is enough to show that $g(R) \in L$ for all $g \in \OO_{R}$.
However, $g(R)=f(\cP)$, where $f=\eta_H^*(g) \in \OO_{\cP}$.
Thus we need to show that $f(\cP) \in L$. 
This is equivalent to showing that $\sigma (f(\cP))=f(\cP)$
for all $\sigma \in \Gal(L(\cP)/L)$, which is equivalent 
to showing that $\sigma(f+\fm_{\cP})=f+\fm_{\cP}$
for all $\sigma \in G_{\cP}$. However, by the construction
of the function field of $D_H$, we see that $\sigma(f)=f$
for all $\sigma \in H \supseteq G_{\cP}$.
This completes the proof.
\end{proof}

\begin{lem}\label{lem:IJ}
Let $P \in C/K$ be a primitive degree $n$ point,
with $\varphi(P)=\alpha \in \PP^1(K) \setminus \BV(\varphi)$,
and suppose $P \notin C(L)$.
Let $P_1=P,P_2,\dotsc,P_n$ be the Galois orbit of $P$,
and let 
\[
	\rho : \Gal(\overline{K}/K) \rightarrow \Sym(\{P_1,\dotsc,P_n\})
\]
be the permutation representation obtained from the Galois
action of $\Gal(\overline{K}/K)$ on the orbit. 
Let $I=\rho(\Gal(\overline{K}/K))$  and $J=\rho(\Gal(\overline{K}/L))$;
these are the Galois groups of $P$ over $K$ and $L$ respectively.
Then the following hold.
\begin{enumerate}[(i)]
	\item $J$ is a non-trivial normal subgroup of $I$
	and is a transitive subgroup of $\Sym(\{P_1,\dotsc,P_n\}) \cong S_n$.
	\item Let $\cP \in \tilde{C}$ be above $P$. Then
	$G_{\cP} \subseteq G$ is conjugate to $J$ when
		both are regarded as subgroups of $S_n$.
	\item There is some subgroup $H$ of $G$, conjugate to $J$ in $S_n$,
	such that $R \in D_H(L)$ where $R=\eta_H(\cP)$. 
\end{enumerate}	
\end{lem}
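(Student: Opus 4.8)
The plan is to establish the three parts in order: (i) is elementary group theory resting on the primitivity of $P$; (ii) is the arithmetic heart of the lemma, identifying the decomposition group $G_{\cP}$ with the Galois group $J$ of $P$ over $L$ \emph{as a permutation group}; and (iii) is then immediate from (ii) together with Lemma~\ref{lem:obvious}.

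For (i), since $L/K$ is Galois, $\Gal(\overline{K}/L)$ is normal in $\Gal(\overline{K}/K)$, so its image $J=\rho(\Gal(\overline{K}/L))$ is normal in $I=\rho(\Gal(\overline{K}/K))$. For non-triviality I would unwind the hypothesis $P \notin C(L)$: it says exactly that $L(P) \neq L$, so the $\Gal(\overline{K}/L)$-orbit of $P_1=P$ has size $>1$, whence some $\tau \in \Gal(\overline{K}/L)$ moves $P_1$ and $J \neq 1$. Finally, as $P$ is primitive, $I$ acts primitively on $\{P_1,\dots,P_n\}$; since $J$ is a non-trivial normal subgroup of $I$, Lemma~\ref{lem:primnorm} forces the $J$-action to be transitive (the trivial alternative being excluded because $J \neq 1$). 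This yields all of (i), and in passing shows $P$ has degree $n$ over $L$ as well.

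For (ii), write $H_0 = \Gal(\KK/L(C)) \le G$, so that $[G:H_0]=n$ and the $n$ sheets of $\varphi$ are identified $G$-equivariantly with the coset space $G/H_0$; the embedding $G \hookrightarrow \Sym(\{P_1,\dots,P_n\}) \cong S_n$ used throughout is the left multiplication action on $G/H_0$. Since $\alpha \notin \BV(\varphi)=\BV(\mu)$, the place $\cP$ is unramified in $\mu$, so by the analysis in the proof of Lemma~\ref{lem:obvious} the residue extension $L(\cP)/L$ is Galois with $\Gal(L(\cP)/L) \cong G_{\cP}$. On the other hand, as $\KK$ is the Galois closure of $L(C)/L(\PP^1)$ and $\cP$ is unramified, the standard specialization fact that the residue field of a Galois closure is the Galois closure of the residue extension identifies $L(\cP)$ with the Galois closure of $L(P)/L$; hence $\Gal(L(\cP)/L)$ is precisely $J$, and $G_{\cP} \cong J$. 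To upgrade this to conjugacy inside $S_n$ I would match the two permutation representations explicitly: under the identification of the sheets $G/H_0$ with the $n$ embeddings of $L(P)$ into $L(\cP)$, the action of $G_{\cP}$ on the sheets corresponds to the action of $\Gal(L(\cP)/L)=J$ on the conjugates $\{P_1,\dots,P_n\}$, so the two subgroups of $S_n$ coincide up to the conjugacy introduced by the choice of $\cP$ above $P$ (equivalently, by the labelling of the cosets). I expect this final matching to be the main obstacle, as it requires tracking the decomposition-group action on cosets against the Galois action on conjugates while carefully accounting for the base-point, hence conjugacy, ambiguity.

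For (iii), take $H=G_{\cP}$. Then $G_{\cP} \subseteq H$ trivially, and $H$ is conjugate to $J$ in $S_n$ by (ii). Moreover $\mu(\cP)=\varphi(P)=\alpha \in \PP^1(K) \subseteq \PP^1(L) \setminus \BV(\varphi)$, so Lemma~\ref{lem:obvious} applies with this $H$ and gives $R=\eta_H(\cP) \in D_H(L)$, completing the proof.
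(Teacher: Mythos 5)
Your proposal is correct and follows essentially the same route as the paper: (i) via Lemma~\ref{lem:primnorm} applied to the normal subgroup $J$ of the primitive group $I$, (ii) by identifying $L(\cP)$ with the Galois closure of $L(P)/L$ at the unramified place and matching the decomposition-group action on sheets with the Galois action on conjugates, and (iii) by taking $H=G_{\cP}$ and invoking Lemma~\ref{lem:obvious}. The ``final matching'' you flag as the main obstacle is exactly what the paper's proof carries out explicitly, by choosing a generator $\theta$ of $K(P)/K$, writing down a plane model $F(U,V)=0$ with roots $g_1,\dotsc,g_n$ of $F(U,\varphi)$ in $\KK$, and checking that reduction at $\cP$ sends $g_i$ to the conjugates $\theta_i$ equivariantly, so that $L(\cP)=L(\theta_1,\dotsc,\theta_n)$ and $G_{\cP}$ is conjugate to $J$ inside $S_n$.
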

\begin{proof}
By assumption, $\rho(\Gal(\overline{K}/K))$ is a primitive
subgroup of $\Sym(\{P_1,\dotsc,P_n\}) \cong S_n$.
Since $L/K$ is Galois,
$J=\rho(\Gal(\overline{K}/L))$
is a normal subgroup of $\rho(\Gal(\overline{K}/K))$.
By Lemma~\ref{lem:primnorm}, the group $J \subset \Sym(\{P_1,\dotsc,P_n\})$
is either trivial or transitive. 
However, since $P \notin C(L)$,
the group $J$ is non-trivial,
and therefore transitive.
This proves (i).

Recall that $\varphi(P)=\alpha \in \PP^1(K) \setminus \BV(\varphi)$.
Since $P$ has precisely $n$ conjugates, and $\deg(\varphi)=n$,
we see that the fibre $\varphi^*(\alpha)$ consists
of $P_1,\dotsc,P_n$, each with multiplicity $1$.
By composing $\varphi$ with a suitable automorphism of $\PP^1$
we may suppose that $\alpha \in \Aff^1(K)=K$. We shall find it 
convenient to think of $\varphi$ as an element of $K(C)$,
and with this identification we have $K(\PP^1)=K(\varphi) \subseteq K(C)$.
The extension $K(C)/K(\varphi)$ has degree $n$.

Write
\[
	\OO_{P}=\{ h \in K(C) \; : \; \ord_P(h) \ge 0\}, 
        \qquad
	\fm_{P}=\{ h \in K(C) \; : \; \ord_P(h) \ge 1\},
\]
for the valuation ring of $P$ and its maximal ideal. Then the residue field
$\OO_{D}/\fm_{D}$ can be identified with $K(P)$ where
the identification is given by $g +\fm_{D} \mapsto g(P)$.
Now fix $\theta \in K(P)$ such that $K(P)=K(\theta)$.
Note that $[K(\theta) : K]=n$ since $P$ has degree $n$.
Then there is some $g \in \OO_{P}$ such that
$g(P)=\theta$. As $g \in K(C)$ and $K(C)$ has degree $n$ over $K(\varphi)$,
there is a polynomial $F(U,V) \in K[U,V]$,
\begin{equation}\label{eqn:FUV}
        F(U,V)=\sum_{i=1}^m a_i(V) U^i, \qquad a_i(V) \in K[V]
\end{equation}
of degree $m \mid n$, such that $\gcd(a_0(V),\dotsc,a_m(V))=1$,
and $F(g,\varphi)=0$.
Now, $F(\theta,\alpha)=F(g(P),\varphi(P))=0$, and so $\theta$ is a root of the polynomial $F(U,\alpha) \in \Q[U]$;
this polynomial is non-zero as $\gcd(a_0(V),\dotsc,a_m(V))=1$.
As $\theta$ has degree $n$ over $K$, it follows that $m=n$, and 
that $F(U,V)$ is irreducible over $K(V)$.
In particular $F(U,V)=0$ is a (possibly singular) plane model for $C/K$,
and the map $\varphi$ is given by $(u,v) \mapsto v$. As $C$
is absolutely irreducible, $F(U,V)$ is irreducible over $\overline{K}$.
Let $g_1=g,g_2,\dotsc,g_n$ be the roots of $F(U,\varphi)=0$ in $\KK$;
then $\KK=K(\varphi)(g_1,\dotsc,g_n)$. In particular, $G^\prime=\Gal(\KK/K(C))$
may be identified as a transitive subgroup of $\Sym(g_1,\dotsc,g_n) \cong S_n$.

Let $\theta_1=\theta,\theta_2,\dotsc,\theta_n$ be the roots of $F(U,\alpha)=0$,
which are distinct since $K(\theta)/K$ has degree $n$.
We see that the affine plane model $F(U,V)=0$ for $C$
has $n$ distinct points $(\theta_1,\alpha),\dotsc,(\theta_n,\alpha)$
above $\alpha \in \PP^1$. 
However, the smooth model $C$ has precisely $n$ points $P_1,\dotsc,P_n$
above $\alpha \in \PP^1$.
After relabeling
we may identify $P_i=(\theta_i,\alpha)$. 
Next we consider the action of $\Gal(\overline{K}/L)$ on 
$P_1,\dotsc,P_n$, and recall that $\alpha \in K \subseteq L$.
It follows that $J$ is conjugate to $\Gal(L(\theta_1,\dotsc,\theta_n)/L)$ when
we consider $J$ as a subgroup of $\Sym(\{P_1,\dotsc,P_n\}) \cong S_n$
and $\Gal(L(\theta_1,\dotsc,\theta_n)/L)$ as a subgroup of $\Sym(\{\theta_1,\dotsc,\theta_n\}) \cong S_n$.

Let $\cP$ be a point of $\tilde{C}$ above $P$.
Thus 
\[
	g_1(\cP)=g(P)=\theta, \qquad \mu(\cP)=\varphi(P)=\alpha \in K \subseteq L.
\]
As in the proof of Lemma~\ref{lem:obvious},
the extension $L(\cP)/L$ is Galois. Since $\theta=g_1(\cP) \in L(\cP)$
we see that $\theta_1,\dotsc,\theta_n \in L(\cP)$.
In particular, for each $i$, there is an automorphism $\sigma \in L(\cP)/L$
such that $\sigma(\theta)=\theta_i$. Recalling
the natural identification of $\Gal(L(\cP)/L)$ with $G_\cP$,
we see that there is an automorphism $\sigma^\prime \in G_\cP$
such that $\sigma^\prime(g_1)(\cP)=\sigma(\theta)=\theta_i$.
However, $\sigma^\prime(g_1)$ is a root of $F(U,\varphi)$
and is equal to one of the $g_j$. Thus, $g_i(\cP)=\theta_i$,
after suitably reordering 
$g_1,\dotsc,g_n$.  Since $g_1,\dotsc,g_n$ generate $\KK$, we conclude
that $L(\cP)=L(\theta_1,\dotsc,\theta_n)$,
and that $G_\cP$ is conjugate to $\Gal(L(\theta_1,\dotsc,\theta_n)/L)$
which is in turn conjugate to $J$. This proves (ii).

Finally, letting $H=G_\cP$, we deduce (iii) from
Lemma~\ref{lem:obvious}.
\end{proof}

\bigskip

Of course, if $\genus(D_H) \ge 2$, 
then by Faltings' theorem there are only finitely many
$R \in D_H(L)$.
We can determine the genus of the curve $D_{H}$ 
using the monodromy data of $\varphi$ as we now
explain. Fix an embedding $L \subset \overline{L} \subset \C$.
Then $\varphi$ induces an \'{e}tale covering $C(\C) \setminus \varphi^{-1}(\BV(\varphi)) \rightarrow \PP^1(C) \setminus \BV(\varphi)$
of Riemann surfaces.
Since $\KK/L(\PP^1)$ is regular
(i.e. $\KK \cap \overline{L}=L$), we can identify
$G=\Gal(\KK/L(\PP^1))$ with the image of the monodromy representation
\cite[Chapter 4]{Miranda} of this covering.
Moreover, 
monodromy attaches (e.g.\ \cite[Corollary 4.10]{Miranda})
elements 
\begin{equation}\label{eqn:monodromy1}
\sigma_1,\sigma_2,\dotsc,\sigma_r \in G \setminus \{1\}
\end{equation}
to the branch values $\BV(\varphi)=\{\beta_1,\beta_2,\dotsc,\beta_r\}$,
satisfying 
\begin{equation}\label{eqn:monodromy2}
	G=\langle \sigma_1,\dotsc,\sigma_r \rangle,
	\qquad \sigma_1\sigma_2 \cdots \sigma_r=1.
\end{equation}
\begin{lem}\label{lem:monodromy}
$\displaystyle \genus(D_H)=1-[G:H]+\frac{1}{2} \sum_{i=1}^r \ind(\sigma_i,G/H)$.
\end{lem}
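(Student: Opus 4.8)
The plan is to identify $\pi_H : D_H \rightarrow \PP^1$ with the intermediate cover of the Galois cover $\mu : \tilde{C} \rightarrow \PP^1$ attached to the coset space $G/H$, and then to apply the Riemann--Hurwitz formula, reading off its ramification data from the monodromy elements $\sigma_1, \dots, \sigma_r$. First I would fix the embedding $L \subset \C$ used to define the monodromy and pass to the associated covers of Riemann surfaces; since the genus is a geometric invariant, computing $\genus(D_H)$ over $\C$ is harmless. The degree of $\pi_H$ is $[G:H]$ because $L(D_H) = \KK^H$ and $[\KK : \KK^H] = \# H$; moreover $\KK^H \cap \overline{L} \subseteq \KK \cap \overline{L} = L$, so $D_H$ is geometrically connected and Riemann--Hurwitz applies. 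As the target $\PP^1$ has genus $0$ and all ramification is tame over $\C$, this gives
\[
2\genus(D_H) - 2 = -2[G:H] + \sum_{R \in D_H}(e_R - 1),
\]
where $e_R$ is the ramification index of $\pi_H$ at $R$, so it remains to prove $\sum_R (e_R - 1) = \sum_{i=1}^r \ind(\sigma_i, G/H)$.

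Next I would localise over the branch values. By the remark following \eqref{eqn:tikz} we have $\BV(\pi_H) \subseteq \BV(\varphi) = \{\beta_1, \dots, \beta_r\}$, so every ramification point of $\pi_H$ lies above some $\beta_i$, and it suffices to treat each $\beta_i$ separately. Identifying $G$ with the image of the monodromy representation of $\mu$ as in \cite[Chapter 4]{Miranda}, the subcover $D_H = \tilde{C}/H$ has monodromy obtained by composing with the left coset action of $G$ on $G/H$; in particular the local monodromy of $\pi_H$ around $\beta_i$ is the permutation of $G/H$ induced by $\sigma_i$.

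The standard dictionary between local monodromy and ramification for covers of Riemann surfaces (compare \cite[Corollary 4.10]{Miranda}) now says that the points of $\pi_H^{-1}(\beta_i)$ are in bijection with the orbits of $\langle \sigma_i \rangle$ on $G/H$, with ramification index at the point attached to an orbit $\mathcal{O}$ equal to $\#\mathcal{O}$. Hence
\[
\sum_{R \in \pi_H^{-1}(\beta_i)} (e_R - 1) = \sum_{\mathcal{O}} (\#\mathcal{O} - 1) = \#(G/H) - \# \Orb(\sigma_i, G/H) = \ind(\sigma_i, G/H),
\]
by the definition \eqref{eqn:index} together with $\#(G/H) = [G:H]$. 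Summing over $i$ and inserting into Riemann--Hurwitz yields the asserted formula. Note that any $\beta_i$ on which $\sigma_i$ acts with all orbits of size $1$ is simply unramified in $\pi_H$ and contributes $\ind(\sigma_i, G/H) = 0$, so extending the sum over all branch values of $\varphi$ causes no harm.

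The main obstacle is the justification that the local monodromy of the subcover $\pi_H$ at $\beta_i$ is exactly $\sigma_i$ acting on $G/H$, together with the precise identification of ramification indices as orbit lengths; this is where the ramification data of $D_H \rightarrow \PP^1$ must be extracted from the monodromy of $C \rightarrow \PP^1$. I would appeal here to the work of Lombardo, Lorenzo Garc\'{\i}a, Ritzenthaler and Sijsling, who make this translation explicit, or else derive it directly from the Galois correspondence between subgroups of the monodromy group and intermediate covers as developed in \cite{Miranda}.
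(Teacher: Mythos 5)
Your proposal is correct and follows essentially the same route as the paper: compute $\deg(\pi_H)=[G:H]$ from $L(D_H)=\KK^H$, read off the ramification of $\pi_H$ above each $\beta_i$ from the cycle type of $\sigma_i$ acting on $G/H$ (citing Lombardo--Lorenzo Garc\'{\i}a--Ritzenthaler--Sijsling for exactly this translation), identify the local contribution with $\ind(\sigma_i,G/H)$, and conclude by Riemann--Hurwitz. The extra remarks on geometric connectedness of $D_H$ and on branch values with trivial induced action are harmless refinements of the same argument.
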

\begin{proof}
Since $L(D_H)=\KK^H$ we have $\deg(\pi_H)=[G:H]$.
We compute the genus of $D_H$ via the Riemann--Hurwitz
formula, and for this we need the ramification data
for $\pi_H$. This has been determined by
Lombardo, Lorenzo Garc\'{\i}a, 
Ritzenthaler and Sijsling \cite[Proposition 4.20]{Lombardo23}. 
Specifically, consider left multiplication by $\sigma_i$
on $G/H$; this induces a permutation of $G/H$ with
cycle type (say) $(e_1,e_2,\dotsc,e_k)$.
Then the fibre of $\pi_H$ above $\beta_i$
consists of $k$ points with multiplicities
$e_1,e_2,\dotsc,e_k$. Note that
\[
	\begin{split}
		\sum_{P \in \pi_H^{-1}(\beta_i)} e(P)-1 & =
		(e_1-1)+(e_2-1)+\cdots+(e_k-1) \\
		&=(e_1+\cdots+e_k)-k\\
		&=[G:H]-\# \Orb(\sigma_i,G/H)\\
		&=\ind(\sigma_i,G/H).
	\end{split}
\]
We now apply the Riemann--Hurwitz formula to $\pi_H$ to obtain
\[
		2\genus(D_H)-2 = -2 [G:H] + \sum_{i=1}^r \ind(\sigma_i,G/H).
\]
The lemma follows.
\end{proof}

\section{Proof of Theorem \ref{thm:primitive}}\label{sec:proof}
We note that $S_n$ has no proper
primitive subgroups $\ne A_n$ for $n \le 4$.
Thus we may suppose that $n \ge 5$.

Let $K$ be a number field and let $C/K$
be a curve of genus $g$. Let $n \ge 5$
and suppose $g>(n-1)^2$. 
Suppose the Jacobian $J$ is simple
or $J(K)$ is finite.
Suppose $C$
has infinitely many degree $n$
points with Galois group $A_n$ or $S_n$.
By Theorem~\ref{thm:single}, there is a degree $n$ morphism
$\varphi : C \rightarrow \PP^1$ such that all but
finitely many primitive degree $n$ divisors
are fibres $\varphi^*(\alpha)$ with $\alpha \in \PP^1(K)$.
Thus, to prove Theorem~\ref{thm:primitive},
we need to show that there are at most finitely many
$\alpha \in \PP^1(K) \setminus \BV(\varphi)$
such that the Galois group of the fibre $\varphi^*(\alpha)$
is primitive but not $A_n$, $S_n$.

\medskip

As in Section~\ref{sec:monodromy}, we write $\KK$ for the Galois closure of the function
field extension $K(C)/K(\PP^1)$ induced by $\varphi$, 
and we let $G^\prime=\Gal(\KK/K(\PP^1))$. As $\varphi$ has degree $n$,
we may identify $G^\prime$ as a subgroup of $S_n$.
\begin{lem}
$G^\prime=A_n$ or $S_n$.
\end{lem}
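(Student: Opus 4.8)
The plan is to compare the Galois group of a single well-chosen fibre $\varphi^*(\alpha)$ with the generic group $G^\prime$, using that the former is realised as a decomposition subgroup of the latter, and that by hypothesis it is already almost all of $S_n$.

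First I would extract a good fibre. Since $A_n$ and $S_n$ act primitively (Lemma~\ref{lem:2trans}), the infinitely many degree $n$ points with Galois group $A_n$ or $S_n$ are primitive, so by Theorem~\ref{thm:single} all but finitely many of the associated divisors are fibres $\varphi^*(\alpha)$ with $\alpha \in \PP^1(K)$. Each such divisor is the reduced sum $P_1 + \cdots + P_n$ of a single Galois orbit of size $n$; in particular $\varphi^*(\alpha)$ is reduced, which forces $\alpha \notin \BV(\varphi)$. I therefore fix one such $\alpha \in \PP^1(K) \setminus \BV(\varphi)$ for which the fibre has Galois group $A_n$ or $S_n$, and I choose a place $\cP$ of $\KK$ lying above $\alpha$.

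Next I would identify the decomposition group $G^\prime_\cP = \{\sigma \in G^\prime : \sigma(\cP)=\cP\}$. By Lemma~\ref{lem:residue}, the residue extension $K(\cP)/K$ is Galois with group isomorphic to $G^\prime_\cP$. As in the computation of Lemma~\ref{lem:IJ} (carried out here over $K$ rather than over $L$), the reductions at $\cP$ of the roots $g_1,\dots,g_n$ are exactly the residues $\theta_1,\dots,\theta_n$ of the $n$ distinct fibre points, so $K(\cP)$ is their splitting field, i.e.\ the Galois closure of $K(P_1)/K$. Hence $\Gal(K(\cP)/K)$ is precisely the Galois group of the fibre, and therefore $|G^\prime_\cP| \in \{n!/2,\, n!\}$.

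Finally I would finish by a counting argument: from $G^\prime_\cP \subseteq G^\prime \subseteq S_n$ we obtain $|G^\prime| \ge n!/2$, so $[S_n : G^\prime] \le 2$. The only subgroups of $S_n$ of index at most $2$ are $S_n$ and $A_n$ (any index $2$ subgroup is normal and hence contains $[S_n,S_n]=A_n$), whence $G^\prime = A_n$ or $S_n$, as required. The only step needing genuine care is the identification of $G^\prime_\cP$ with the Galois group of the fibre; the order computation then makes the precise permutation structure irrelevant, which is what keeps the argument short.
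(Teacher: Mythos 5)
Your proposal is correct and follows essentially the same route as the paper: pick a fibre with Galois group $A_n$ or $S_n$, take a place $\cP$ of $\KK$ above it, use Lemma~\ref{lem:residue} to realise that Galois group inside the decomposition group $G^\prime_\cP \subseteq G^\prime$, and conclude by order counting that $G^\prime$ has index at most $2$ in $S_n$. Your version merely spells out two points the paper leaves implicit (why $\alpha \notin \BV(\varphi)$, and the final index-$\le 2$ step), both correctly.
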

\begin{proof}
There infinitely many fibres $\varphi^*(\alpha)$ with 
$\alpha \in \PP^1(K) \setminus \BV(\varphi)$ that have Galois group $A_n$ or $S_n$.
Choose such an $\alpha$, let $P \in \varphi^*(\alpha)$; thus
the Galois group of the Galois closure of the extension
$K(P)/K$ has Galois group $S_n$ or $A_n$. Moreover, $P \in C$ is a degree $n$
point and we regard it as a degree $n$ place of $K(C)$. We let $\cP$ be a place of $\KK$
above $P$. By Lemma~\ref{lem:residue}, the extension $K(\cP)/K$
is Galois and its Galois group is 
isomorphic to the decomposition group $G^\prime_\cP \subseteq G^\prime$.
However, $K(P) \subseteq K(\cP)$ and the Galois group of the Galois closure
of $K(P)/K$ is either $A_n$ or $S_n$. In follows that $G^\prime=A_n$ or $S_n$.
\end{proof}

\begin{lem}\label{lem:BV}
Let $r=\# \BV(\varphi)$. Then $r \ge 2n+1$.
\end{lem}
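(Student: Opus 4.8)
The plan is to extract the bound from the Riemann--Hurwitz formula applied to $\varphi : C \to \PP^1$, controlling the contribution of each branch value separately. Since $K$ has characteristic zero the cover is tamely ramified, so Riemann--Hurwitz reads
\[
2g-2 \;=\; -2n \;+\; \sum_{P \in \Ram(\varphi)} \bigl(e_P-1\bigr),
\]
where $e_P$ denotes the ramification index of $P$. I would organise the ramification divisor according to its image in $\PP^1$: for a branch value $\beta \in \BV(\varphi)$ write $F_\beta=\varphi^{-1}(\beta)$ for the fibre, so that $\sum_{P \in F_\beta} e_P = n$ and hence $\sum_{P \in F_\beta}(e_P-1) = n - \#F_\beta$.

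The key observation is that each fibre contributes at most $n-1$ to the ramification. Indeed, every fibre is nonempty, so $\#F_\beta \ge 1$ and therefore $\sum_{P \in F_\beta}(e_P-1) \le n-1$. Equivalently, in the monodromy language of Section~\ref{sec:monodromy} this contribution equals $\ind(\sigma_i,\Omega)$ for the natural degree $n$ action $\Omega=\{1,\dotsc,n\}$, and $\ind(\sigma_i,\Omega)=n-\#\Orb(\sigma_i,\Omega)\le n-1$ since any permutation of an $n$-element set has at least one orbit. (One could equally invoke Lemma~\ref{lem:monodromy} with $H$ a point stabiliser, for which the base change of $C$ to $L$ is $D_H$ and $[G:H]=n$, giving $2g-2+2n=\sum_{i=1}^r \ind(\sigma_i,\Omega)$.) Summing over the $r$ branch values yields
\[
2g-2 \;\le\; -2n + r(n-1), \qquad\text{so}\qquad r \;\ge\; \frac{2g+2n-2}{n-1}.
\]

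Finally I would feed in the genus hypothesis $g>(n-1)^2$. Using $2g+2n-2 > 2(n-1)^2+2(n-1)=2n(n-1)$, the displayed inequality gives $r > 2n$, and since $r$ is an integer this is exactly $r \ge 2n+1$. There is no genuine obstacle here beyond bookkeeping: the only points requiring care are the uniform per-fibre bound $\le n-1$ (which, notably, does not use that $G'$ is $A_n$ or $S_n$, only that $\varphi$ has degree $n$) and the elementary factorisation $2(n-1)^2+2(n-1)=2n(n-1)$ that makes the final quotient collapse to exactly $2n$.
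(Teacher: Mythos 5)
Your proof is correct and follows essentially the same route as the paper: apply Riemann--Hurwitz to $\varphi$, bound each branch fibre's contribution by $n-1$ since every fibre is nonempty, and combine $2g-2\le -2n+r(n-1)$ with $g\ge (n-1)^2+1$ to get $r>2n$, hence $r\ge 2n+1$. The extra remarks about monodromy indices are consistent but not needed; the paper's proof is exactly the bookkeeping you describe.
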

\begin{proof}
Write $\BV(\varphi)=\{\beta_1,\dotsc,\beta_r\}$.
We make use of the Riemann--Hurwitz formula applied to $\varphi$. Thus
\[
	2g-2 \; = \; -2n + \sum_{i=1}^r \sum_{P \in \varphi^*(\beta_i)} e(P)-1
	\; \le \; -2n + r (n-1).
\]
However, $g \ge (n-1)^2+1$. Putting these together gives,
\[
	r \; \ge \; 2(n-1) + \frac{2n}{n-1} \; > \; 2n. 
\]
\end{proof}
As in Section~\ref{sec:monodromy}, let $L=\KK \cap \overline{K}$,
and recall that $L/K$ is a finite Galois extension.
Let $G=\Gal(\KK/L(\PP^1))$.
\begin{lem}\label{lem:threecases}
	\begin{enumerate}[(a)]
		\item Suppose $G^\prime = A_n$. 
	Then $L=K$ and $G=A_n$.
		\item Suppose $G^\prime = S_n$.
		Then
			\begin{enumerate}[(i)]
		\item either $L=K$ and $G = S_n$,
		\item or $L/K$ is quadratic and $G=A_n$.
		\end{enumerate}
	\end{enumerate}
\end{lem}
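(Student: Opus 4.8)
The plan is to exploit the exact sequence~\eqref{eqn:exact}, which identifies $\Gal(L/K)$ with the quotient $G^\prime/G$, where $G = \Gal(\KK/L(\PP^1))$ is a normal subgroup of $G^\prime$. Since we already know $G^\prime = A_n$ or $S_n$, and we are in the range $n \ge 5$ where $A_n$ is simple, the whole argument reduces to identifying the normal subgroups of $A_n$ and $S_n$ and matching them with the possible quotients $\Gal(L/K)$.

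For part~(a), suppose $G^\prime = A_n$. Then $G$ is a normal subgroup of $A_n$, and because $A_n$ is simple for $n \ge 5$, we have $G = 1$ or $G = A_n$. The key observation is that $G$ cannot be trivial: $G = \Gal(\KK/L(\PP^1))$, and if $G = 1$ then $\KK = L(\PP^1)$, forcing $\varphi$ to be an extension with no nontrivial automorphisms in its Galois closure over $L(\PP^1)$—but $\KK$ is the Galois closure of a degree $n$ extension with $n \ge 5$, so $G$ must be transitive of degree $n$ and in particular nontrivial. Hence $G = A_n$, and then $\Gal(L/K) \cong G^\prime/G = A_n/A_n = 1$, so $L = K$.

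For part~(b), suppose $G^\prime = S_n$. Now $G \trianglelefteq S_n$, and the normal subgroups of $S_n$ for $n \ge 5$ are exactly $1$, $A_n$, and $S_n$. The same transitivity argument rules out $G = 1$. If $G = S_n$, then $\Gal(L/K) \cong S_n/S_n = 1$, giving case~(i): $L = K$ and $G = S_n$. If instead $G = A_n$, then $\Gal(L/K) \cong S_n/A_n \cong \Z/2\Z$, giving case~(ii): $L/K$ is quadratic and $G = A_n$. These exhaust the possibilities, completing the proof.

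The main obstacle, and the step requiring the most care, is justifying that $G$ is nontrivial—equivalently, that $\tilde C \to \PP^1$ is a genuine cover of degree $> 1$ rather than collapsing to $\PP^1$ itself. This must be argued from the fact that $\KK$ is the Galois closure over $L(\PP^1)$ of the degree $n$ function field extension $L(C)/L(\PP^1)$ with $n \ge 5$, so that $G$ acts transitively (indeed with the same transitive image in $S_n$ that $G^\prime$ carries), and a transitive subgroup of $S_n$ on $n \ge 5$ points is certainly nontrivial. Everything else is a direct application of the simplicity of $A_n$ and the classification of normal subgroups of $A_n$ and $S_n$, read off through the exact sequence~\eqref{eqn:exact}.
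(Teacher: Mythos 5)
Your proposal is correct and follows essentially the same route as the paper: read $\Gal(L/K)$ off as the quotient $G^\prime/G$ via the exact sequence \eqref{eqn:exact}, note that $G$ is a non-trivial normal subgroup of $G^\prime$, and invoke the classification of normal subgroups of $A_n$ and $S_n$ for $n\ge 5$. The only divergence is in how you rule out $G=1$: the paper simply observes that $\varphi$ is ramified, so $\KK/L(\PP^1)$ is a non-trivial extension, whereas you argue that $G$ acts transitively on the $n$ conjugates because $\KK$ is the Galois closure of the degree $n$ extension $L(C)/L(\PP^1)$ — this is also valid, though it silently uses that $[L(C):L(\PP^1)]=n$, which rests on the linear disjointness of $L$ and $K(C)$ over $K$ (i.e.\ on $K(C)\cap\overline{K}=K$, the geometric connectedness of $C$ recorded at the start of Section~\ref{sec:monodromy}).
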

\begin{proof}
As $\varphi$ is ramified, the extension $\KK/L(\PP^1)$
is non-trivial. Therefore, by the exactness of \eqref{eqn:exact},
the group $G$
is a non-trivial normal subgroup of $G^\prime$.
As $n \ge 5$, the only non-trivial normal subgroup of $A_n$ is $A_n$,
and the only non-trivial normal subgroups of $S_n$ are $S_n$ and $A_n$.
The lemma follows.
\end{proof}

\begin{lem}\label{lem:genus}
Let $H$ be a transitive subgroup of $G$.
Suppose the following.
\begin{enumerate}[(a)]
\item If $G=A_n$ then $H$ is maximal in $G$.
\item If $G=S_n$ then either $H \ne A_n$ and maximal in $S_n$,
or $H \subset A_n$ and maximal in $A_n$.
\end{enumerate}
Then $\genus(D_H) \ge 2$.
\end{lem}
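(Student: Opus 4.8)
The plan is to combine the genus formula of Lemma~\ref{lem:monodromy} with the lower bounds on minimal index from Lemma~\ref{lem:ind} and the lower bound on the number of branch values from Lemma~\ref{lem:BV}. Recall that Lemma~\ref{lem:monodromy} gives
\[
\genus(D_H)=1-[G:H]+\frac{1}{2}\sum_{i=1}^r \ind(\sigma_i,G/H),
\]
where $\sigma_1,\dotsc,\sigma_r$ are the monodromy generators attached to the $r=\#\BV(\varphi)$ branch values, each $\sigma_i \ne 1$. Since every $\sigma_i$ is a nontrivial element of $G$, each term $\ind(\sigma_i,G/H)$ is bounded below by the minimal index $\ind(G,G/H)$. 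Thus
\[
\genus(D_H) \ge 1-[G:H]+\frac{r}{2}\,\ind(G,G/H).
\]
The strategy is to show the right-hand side is at least $2$, which reduces to proving $r\cdot\ind(G,G/H) \ge 2[G:H]+2$.

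First I would split into the three cases provided by Lemma~\ref{lem:threecases}, matched against the hypotheses (a) and (b) of the present lemma. In the case $G=A_n$ with $H$ maximal transitive in $A_n$, Lemma~\ref{lem:ind}(I) gives $\ind(G,G/H) \ge [A_n:H]/4=[G:H]/4$. In the case $G=S_n$ with $H \ne A_n$ maximal in $S_n$, Lemma~\ref{lem:ind}(II) gives $\ind(G,G/H) \ge [S_n:H]/6=[G:H]/6$. In the remaining case $G=S_n$ with $H\subset A_n$ maximal in $A_n$, Lemma~\ref{lem:ind}(III) gives $\ind(G,G/H) \ge [S_n:H]/8=[G:H]/8$. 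Writing $m=[G:H]$, in each case the bound takes the shape $\ind(G,G/H) \ge m/c$ with $c\in\{4,6,8\}$, so the displayed inequality becomes
\[
\genus(D_H) \ge 1-m+\frac{r m}{2c}=1+m\Bigl(\frac{r}{2c}-1\Bigr).
\]

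The key numerical input is then Lemma~\ref{lem:BV}, which gives $r \ge 2n+1$. I would verify that $r \ge 2n+1$ forces $r/(2c)-1 > 0$ in each of the three cases, i.e.\ that $r>2c$, so that the coefficient of $m$ is strictly positive; since $m=[G:H] \ge 2$ (as $H$ is a proper subgroup, being transitive and hence not all of $G$, using that the natural action has degree $n\ge 5$), we obtain $\genus(D_H) \ge 1 + 2\cdot(\text{positive})$. Concretely, the worst case is $c=8$, requiring $r>16$; since $r\ge 2n+1\ge 11$ for $n\ge 5$, this is not immediate for small $n$, which I expect to be the main obstacle. I would therefore sharpen the argument: rather than bounding $m\ge 2$, I would use that $m=[G:H]$ is itself large for a transitive (hence index-bounded) maximal subgroup, or alternatively exploit that the worst value $c=8$ only occurs when $G=S_n$ and $H\subset A_n$, in which case $[S_n:H]=2[A_n:H]$ is even larger. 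Tracking these interactions between $r$, $c$, and $m=[G:H]$ carefully—and checking the small-$n$ boundary cases—is where the real work lies; once $r/(2c)-1>0$ is secured together with a sufficient lower bound on $m$, the inequality $\genus(D_H)\ge 2$ follows immediately and completes the proof.
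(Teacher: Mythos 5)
Your generic argument matches the paper's exactly: the genus formula of Lemma~\ref{lem:monodromy}, the bound $\ind(\sigma_i,G/H)\ge\ind(G,G/H)$, the branch-value count $r\ge 2n+1$ from Lemma~\ref{lem:BV}, and the three minimal-index bounds of Lemma~\ref{lem:ind} give $\genus(D_H)\ge 1+m\bigl(\tfrac{r}{2c}-1\bigr)$ with $c\in\{4,6,8\}$, and this settles the lemma whenever $2n+1>2c$ (using integrality of the genus to pass from $>1$ to $\ge 2$). You also correctly identify that this fails for small $n$: case (II) with $c=6$ fails for $n=5$, and case (III) with $c=8$ fails for $n=5,6,7$.

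The gap is in your proposed repair of those small-$n$ cases, and the repair you suggest cannot work. If $r\le 2c$ then the coefficient $\tfrac{r}{2c}-1$ of $m=[G:H]$ is non-positive, so any \emph{larger} lower bound on $m$ makes the estimate $1+m\bigl(\tfrac{r}{2c}-1\bigr)$ \emph{worse}, not better; likewise the observation that $[S_n:H]=2[A_n:H]$ in case (III) only inflates $m$ and buys nothing. The quantity that must be improved is the ratio $\rho(G,H)=\ind(G,G/H)/[G:H]$ itself, not the index $[G:H]$: the criterion is $\rho(G,H)>2/(2n+1)$, and the constants $1/6$ and $1/8$ coming from the fixed-point-ratio bounds are simply too weak for $n\in\{5,6,7\}$. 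The paper closes this by brute force: it enumerates (in \texttt{Magma}) all conjugacy classes of transitive subgroups $H$ that are maximal in $A_n$, or maximal in $S_n$ with $H\ne A_n$, for $n=5,6,7$, and computes $\rho(S_n,H)$ exactly for each (Table~\ref{table:rho}); every value exceeds $2/(2n+1)$, in one instance ($n=6$, $H\cong S_5$, $\rho=1/6$ versus $2/13$) only barely. Without this finite verification, or some equally explicit substitute, your proof does not cover $n=5$ in case (II) or $n=5,6,7$ in case (III).
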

\begin{proof}
We make use of the formula in Lemma~\ref{lem:monodromy}.
Note that the $\sigma_i$ appearing in the formula
are all non-trivial elements on $G$ (their
properties are summarized in \eqref{eqn:monodromy1}
and \eqref{eqn:monodromy2}). In particular,
$\ind(\sigma_i,G/H)\ge \ind(G,G/H)$. We make
use of the lower bounds for $\ind(G,G/H)$ in Lemma~\ref{lem:ind}.
Write 
\[
\rho(G,H) \; = \; \frac{\ind(G,G/H)}{[G:H]}.
\]
 Then, by Lemma~\ref{lem:monodromy}
\[
\begin{split}
\genus(D_H) & \ge 1-[G:H]+\frac{r}{2} \ind(G,G/H)\\
& \ge 1-[G:H]+\frac{2n+1}{2} \ind(G,G/H) \qquad \text{(by Lemma~\ref{lem:BV})}\\
&=1+\left(\frac{(2n+1)\rho(G,H)}{2} -1 \right) [G:H].
\end{split}
\]
Hence $\genus(D_H) \ge 2$ provided
\begin{equation}\label{eqn:criterion}
\rho(G,H)>\frac{2}{2n+1}.
\end{equation}

Suppose first $G=A_n$ and $H$ is maximal in $G$.
By Lemma~\ref{lem:ind}, we have $\rho(A_n,H) \ge 1/4$,
and so \eqref{eqn:criterion} holds as $n \ge 5$.

Next suppose $G=S_n$ and $H \ne A_n$ is maximal in $S_n$.
Then $\rho(S_n,H) \ge 1/6$ by Lemma~\ref{lem:ind}, and so
\eqref{eqn:criterion} holds for $n\ge 6$. We return to the case
$n=5$ later.

Next suppose $G=S_n$ and $H$ is maximal in $A_n$.
Then $\rho(S_n,H) \ge 1/8$ by Lemma~\ref{lem:ind},
and so \eqref{eqn:criterion} holds for $n \ge 8$.

It remains to consider the cases $n=5$, $6$, $7$.
Using the computer algebra package \texttt{Magma} \cite{magma}
we enumerated (up to conjugacy) all transitive subgroups $H$
that are either maximal in $S_n$ but $\ne A_n$ or maximal in $A_n$,
and for each computed the ratio $\rho(S_n,H)$. The results
are given in Table~\ref{table:rho}. The fact that the last column
is positive shows that \eqref{eqn:criterion} holds in all cases and completes the proof.
\begin{table}
\begin{tabular}{|c|c|c|c|c|c|c|}
\hline
$n$ & $H$  & $\# H$ & $[S_n:H]$ & $\ind(S_n,S_n/H)$ & $\rho(S_n,H)$ & $\rho(S_n,H)-2/(2n+1)$ \\
\hline
 $5$ & $D_5$ & $10$ & $12$ & $4$ & $1/3$ & $5/33 \approx 0.15$\\ 
\hline
$5$ & $F_5$ & $20$ & $6$ &  $2$ & $1/3$ & $5/33 \approx 0.15$\\
\hline
$6$ & $S_4$ & $24$ & $30$ & $12$ & $2/5$ & $16/65 \approx 0.25$\\ 
\hline
 $6$ & $C_3^2 : C_4$ & $36$ & $20$ & $8$ & $2/5$ & $16/65 \approx 0.25$ \\
\hline
 $6$ & $A_5$ & $60$ & $12$ & $4$ & $1/3$ & $7/39 \approx 0.18$ \\
\hline
$6$ & $C_2 \times S_4$ & $48$ & $15$ & $4$ & $4/15$ & $22/195 \approx 0.11$ \\
\hline
 $6$ & $S_3 \wr C_2$ & $72$ & $10$ & $3$ & $3/10$ & $19/130 \approx 0.15$\\
\hline
$6$ & $S_5$ &  $120$ & $6$ & $1$ & $1/6$ & $1/78 \approx 0.01$\\
\hline
$7$ & $F_7$ & $42$ &  $120$ & $56$ & $7/15$ & $1/3 \approx 0.33$\\
\hline
$7$ & $\PSL(2,7)$ & $168$ & $30$ & $12$ & $2/5$ & $4/15 \approx 0.27$\\
\hline
\end{tabular}
\caption{The table gives the ratio
$\rho(S_n,H)=\ind(S_n,S_n/H)/[S_n:H]$, for $n=5$, $6$, $7$, and for
conjugacy classes of transitive subgroups $H \subset S_n$,
which are either maximal in $A_n$, or maximal in $S_n$ but $\ne A_n$.
Here, in the second column, we give the name of the group $H$
according to the Group Names Database \texttt{GroupNames.org}.}
\label{table:rho}
\end{table}
\end{proof}

We now complete the proof of Theorem~\ref{thm:primitive}.
As observed at the beginning of the section, 
we need to show that there are at most finitely many
$\alpha \in \PP^1(K) \setminus \BV(\varphi)$
such that the Galois group of the fibre $\varphi^*(\alpha)$
is primitive but not $A_n$, $S_n$.
It is therefore enough, for each primitive subgroup $I \subset G^\prime$,
with $I \ne A_n$, $S_n$, to show that there are finitely many
$\alpha \in \PP^1(K) \setminus \BV(\varphi)$ such that the
fibre $\varphi^*(\alpha)$ has Galois group $I$.
Fix a primitive subgroup $I \subset G^\prime$, $I \ne A_n$, $S_n$,
and suppose there are infinitely many $\alpha \in \PP^1(K) \setminus \BV(\varphi)$
such that the fibre $\varphi^*(\alpha)$ has Galois group $I$.
Since $C(L)$ is finite by Faltings' theorem, only finitely
many of these fibres contain a point of $C(L)$. 
Thus for infinitely many of the fibres there is a primitive
degree $n$ point $P \in C \setminus C(L)$ whose Galois group is $I$.
There are finitely many possibilities for the groups $J$, $H$
in Lemma~\ref{lem:IJ}. As $I \ne A_n$, $S_n$,
by the Lemma, $H \ne A_n$, $S_n$ is a transitive subgroup of $G$.
If $H \subset A_n$ then let $H^\prime$ be a maximal subgroup of $A_n$
containing $H$. If $H \not \subset A_n$, then in this case $G=S_n$,
and we let $H^\prime$ be a maximal subgroup of $S_n$ containing $H$,
and note that $H^\prime \ne A_n$. As $H \subseteq H^\prime$
we have $\KK^{H^\prime} \subseteq \KK^H$ and so
the map $\pi_H : D_H \rightarrow \PP^1$ factors
via the map $\pi_{H^\prime} : D_H^\prime \rightarrow \PP^1$.
In particular, by Lemma~\ref{lem:genus} (applied with $H^\prime$
in place of $H$)
the curve $D_{H^\prime}$ has genus $\ge 2$
and therefore so does the curve $D_H$.
However, $D_H(L)$ is finite. This gives a contradiction
and completes the proof.

\bibliographystyle{abbrv}
\bibliography{Primitive}
\end{document}